\theoremstyle{plain}    
 \newtheorem{theorem}{Theorem}[section]
 \numberwithin{equation}{section} 
 \numberwithin{figure}{section} 
 \theoremstyle{plain}
 \theoremstyle{plain}    
 \newtheorem{corollary}[theorem]{Corollary} 
 \theoremstyle{plain}    
 \newtheorem{proposition}[theorem]{Proposition} 
 \theoremstyle{plain}    
 \newtheorem{lemma}[theorem]{Lemma} 
 \theoremstyle{remark}
 \newtheorem{remark}[theorem]{Remark}
 \theoremstyle{definition}
\theoremstyle{definition}
\newtheorem{definition}[theorem]{Definition}
\newcommand{\de}{\delta}
\newcommand{\e}{\epsilon}
\newcommand{\om}{\omega}
\newcommand{\f}{\varphi}
\newcommand{\p}{\partial}
\newcommand{\dd}{i\partial \bar{\partial}}
\newcommand{\tr}{{\rm tr}}
\newcommand{\J}{{\mathcal{J}}}
\newcommand{\M}{{\mathcal{M}}}
\newcommand{\La}{\Lambda}
\address{Institut de Math\'ematiques de Jussieu-Paris Rive Gauche \\ Sorbonne Universit\'e \\ 4 place Jussieu \\  75005 Paris, France}
\email{\href{mailto:tat-dat.to@imj-prg.fr}{tat-dat.to@imj-prg.fr}}
\urladdr{\href{https://sites.google.com/site/totatdatmath/home}{https://sites.google.com/site/totatdatmath}}
\title[Degenerate  J-flow]{Degenerate  J-flow on compact K\"ahler manifolds}
\author{Tat  Dat T\^o}
\date{\today}
\begin{document}
\begin{abstract}
In this  note, we study a degenerate twisted J-flow on compact K\"ahler manifolds. We show that it  exists for all time, it is unique and converges to a weak solution of a degenerate twisted J-equation.   In particular, this confirms an expectation formulated by  Song-Weinkove for the J-flow. As a consequence,  we  establish the properness of the Mabuchi K-energy twisted by a certain semi-positive closed (1,1)-form  for K\"ahler classes in a certain subcone.
\end{abstract}

\maketitle

\section{introduction}
Let $(X, \omega)$ be a compact K\"ahler manifold of complex dimension $n$.  We define the space $\mathcal{H}_\omega$  of K\"ahler potentials  by 
$$\mathcal{H}_\omega = \{ \f\in C^\infty(X): \omega_\f:= \omega+\dd \f>0\}.$$
 Let $\theta$ be another K\"ahler metric. The  J-flow is the parabolic flow defined on  $\mathcal{H}_\om$ by
\begin{equation}
     \dfrac{\p \f }{\p t}= c-\dfrac{n\omega_{\f }^{n-1}\wedge \theta }{\omega_\f ^n} = c-\tr_{\omega_\f}\theta, \quad \f|_{t=0}=\f_0,
\end{equation}
 where $\f_0\in \mathcal{H}_\omega$ and  $c$ is the constant given by
 $$c=\dfrac{n[\theta]\cdot [\omega]^{n-1}}{[\omega]^n}.$$ 
A stationary point $u$ of the J-flow defines a K\"ahler metric   $\tilde \omega=\omega+i\p\bar \p u  \in [\omega]$  satisfying  the J-equation
$$\tilde\omega^{n-1}\wedge \theta = c\tilde\omega^{n} .$$

The J-flow was introduced by Donaldson \cite{Don99} in the setting of moment maps and by Chen \cite{Che00} as the gradient flow of the $\J$-functional appearing in the formula of the Mabuchi K-energy. The critical points of the Mabuchi K-energy are constant scalar curvature Kähler (cscK) metrics.  The existence of cscK metrics in a given K\"ahler class is a fundamental problem in K\"ahler geometry and has been extensively studied. 
 Tian \cite{Tian97} conjectured that the properness of the Mabuchi  K-energy implies the existence cscK  metrics. This conjecture was proved recently by Chen-Cheng  \cite{CC21a, CC21b}.

In \cite{SW08},  Song-Weinkove pointed  that $\J$ being bounded from below is sufficient to imply the properness of the Mabuchi  K-energy. A uniform bound from below for the $\J$-functional can be obtained as long as the corresponding J-flow exists for all time and converges.  For this reason  the J-flow   has   received  a lot  of attention in order to determine  conditions  for its convergence.   Necessary and sufficient conditions  for the regular case (i.e. when $\omega$ and $\theta$ are K\"ahler) have been obtained by Song-Weinkove \cite{SW08} extending previous results by Weinkove \cite{Wein04,Wein06}. An algebraic understanding of these conditions has been developed in  \cite{ LS15, CS,  GaoChen, Son20, SD}.

In the degenerate case, 
  conditions for the existence and convergence of {\sl weak J-flows} in dimension two have obtained in \cite{FLSW, SW12}. In particular, Song-Weinkove \cite{SW12} assumed that $\theta$ is only semi-positive, positive away from a divisor $D $ and big,  then under certain  condition the  weak J-flow  exists,  is smooth away from $D$ and converges in $C^\infty$ away from $D $ to a weak solution of a degenerate J-equation.  They  expected that this result can be extended to higher dimension (cf. \cite[Remark 3.1]{SW12}).

 \medskip
 In \cite{Zhe15},  Zheng  suggested to study a twisted version of the J-flow defined by
 \begin{equation}
     \dfrac{\p \f }{\p t}= c_\beta-\dfrac{n\omega_{\f }^{n-1}\wedge \theta }{\omega_\f ^n}  -\beta \frac{\omega^n}{\omega_\f^n}= c_\beta-\tr_{\omega_\f}\theta -\beta \frac{\omega^n}{\omega_\f^n}, \quad \f|_{t=0} =\f_0,
\end{equation}
 where $\beta\in [0,+\infty)$,  $\f_0\in \mathcal{H}_\omega$ and  $c_\beta$ is the constant given by
 $$c_\beta=\dfrac{n[\theta]\cdot [\omega]^{n-1}}{[\omega]^n}+ \beta.$$ 
 When $\beta=0$, we obtain the J-flow.
In \cite{Zhe15},  Zheng proved the existence and convergence under a positivity condition (see \eqref{eq:cond_2} below).  
  A main advantage of this twisted flow is that the  necessary and sufficient condition for the existence and convergence for the twisted J-flow is weaker than the one for the  J-flow, while it may still imply the properness of the Mabuchi K-energy. We will elaborate on this point in section \ref{sect:Mabuchi}.

\medskip
In this note, motivated by  previous works of Song-Weinkove \cite{SW08, SW12} and Zheng \cite{Zhe15},  we study the existence and convergence for the degenerate twisted J-flow where $\theta$ is no longer a K\"ahler metric,  but a closed $(1,1)$-form satisfying a certain nonnegativity condition. We  extend the results by Song-Weinkove \cite{SW12} for the J-flow on K\"ahler surfaces mentioned above  to the twisted J-flow in any dimension.

In particular, this confirms the  expectation formulated in \cite{SW12} for the untwisted case: there exists a unique degenerate J-flow  for all times which is smooth away from a Zariski set and  converges to a weak solution of the degenerate  J-equation.

More precisely, we assume that the form $\theta$ is merely semi-positive and satisfies the following condition.  We fix an effective divisor $D$ on $X$ and $h$ a Hermitian metric on the line bundle $\mathcal{O}_X(D)$. Let $s$ be a holomorphic section of $\mathcal{O}_X(D)$  and assume that 
\begin{equation}\label{eq:cond_1}
\theta\geq C_0 | s|^{2\gamma} \omega, \text{ and } \theta-\varepsilon_0 R_h\geq C_0 \omega,
\end{equation}
for some positive constants $\gamma, C_0, \varepsilon_0$, where $R_h=-i\p\bar \p \log h$ denotes the curvature form of the Hermitian metric $h$. These conditions are for example satisfied for certain 
 $\theta\in c_1(K_X)$ with $K_X$ is big and nef,  i.e $X$ is a smooth minimal model of general type. 

\medskip
We shall moreover assume  that   there is $\hat \omega\in [\omega]$ such that 
\begin{equation}\label{eq:cond_2}
    (c_\beta\hat \omega-(n-1)\theta)\wedge \hat\omega^{n-2}>0,
\end{equation}
as an $(n-1, n-1)$ form,
where  $$c_\beta =\dfrac{n[\theta]\cdot [\omega]^{n-1}}{[\omega]^n}+\beta.$$
In \cite{Zhe15}, the author proved that if $\theta>0$, the condition \eqref{eq:cond_2} is equivalent to the long time existence and the smooth convergence of the twisted J-flow. This result extends the previous one of Song-Weinkove \cite{SW08} for the J-flow. In particular, the condition \eqref{eq:cond_2} is also equivalent to the existence of a $C$-subsolution introduced by Sz\'ekelyhidi \cite{Sze} and Guan \cite{Guan} for the twisted J-equation as well as a parabolic $C$-subsolution introduced by Phong-T\^o \cite{PT} for the twisted J-flow. As explained in \cite{SW08,Zhe15}, when $K_X$ is ample and $\theta$ is a K\"ahler metric in $-c_1(K_X)$ this condition then implies the properness of the  Mabuchi K-energy on $\mathcal{H}_\om$.  We refer to \cite{LSY,Der,JSS,SD2} for recent works  studying sufficient conditions so that the Mabuchi  K-energy  is proper.

\medskip
Let $PSH(X,\omega)$ denote the set of $\omega$-plurisubharmonic functions: these are functions $\phi: X\rightarrow \mathbb{R}\cup \{-\infty\}$ which are locally given as the sum of a smooth and a plurisubharmonic function,  such that $\om+dd^c\phi\geq 0$ in the weak sense of currents and $\phi \not\equiv  -\infty$. We  define \begin{equation}
    \mathcal{H}^{weak}_\omega = \{ \f\in PSH(X,\omega)\cap L^{\infty}(X)\cap C^{\infty}(X\setminus D): \omega+\dd \f>0 \text{ on } X\setminus D \}.
\end{equation}
Our main result is the following.

\begin{theorem}\label{thm:1}
Let $(X, \om)$ be a compact K\"ahler manifold and $\beta$ be  a non-negative constant. Assume that $\theta$ is a closed semi-positive  $(1,1)$-form satisfying  conditions \eqref{eq:cond_1}
 and \eqref{eq:cond_2}.
 
 For any $\f_0\in \mathcal{H}_{\omega}$ and for  all $t>0$ there exists a unique solution $\f=\f(t)  \in \mathcal{H}^{weak}_\omega$ to the degenerate twisted J-flow
\begin{equation}
     \dfrac{\p \f }{\p t}= c_\beta-\dfrac{n\omega_{\f }^{n-1}\wedge \theta }{\omega_\f ^n}  -\beta \frac{\omega^n}{\omega_\f^n}, \quad \f|_{t=0}=\f_0.
\end{equation}
Moreover, $\f(t)\rightarrow \f_\infty$ in $C^\infty_{loc}(X\setminus D)$ as $t\rightarrow \infty$, where $\f_\infty \in 
 \mathcal{H}^{weak}_\omega$   satisfies
$$ c_\beta \omega_{\f_\infty}^{n}  =  n\omega_{\f_\infty}^{n-1}\wedge\theta+\beta \om^n.$$
\end{theorem}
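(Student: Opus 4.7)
The plan is to approximate the degenerate problem by a family of regular twisted J-flows and pass to the limit. For $\varepsilon \in (0,1]$ set $\theta_\varepsilon = \theta + \varepsilon \omega$, which is Kähler. The subsolution condition \eqref{eq:cond_2} is open in $\theta$, so it persists for $\theta_\varepsilon$ with constant $c_\beta^\varepsilon$ close to $c_\beta$. By Zheng's theorem \cite{Zhe15}, the regularized twisted J-flow
$$\frac{\partial \varphi_\varepsilon}{\partial t} = c_\beta^\varepsilon - \tr_{\omega_{\varphi_\varepsilon}}\theta_\varepsilon - \beta \frac{\omega^n}{\omega_{\varphi_\varepsilon}^n}, \qquad \varphi_\varepsilon(0) = \varphi_0,$$
has a unique smooth solution for all $t \geq 0$, converging smoothly as $t \to \infty$ to a smooth $\varphi_\varepsilon^\infty$ solving the corresponding twisted J-equation with $\theta_\varepsilon$.

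The core of the proof lies in obtaining a priori estimates uniform in both $\varepsilon \in (0,1]$ and $t \in [0,\infty)$. A $C^0$ bound on $\varphi_\varepsilon$, after suitable normalization, follows from Kołodziej/Eyssidieux-Guedj-Zeriahi type pluripotential estimates applied to an auxiliary Monge-Ampère equation, while $\dot\varphi_\varepsilon$ is controlled by the maximum principle applied to $e^{-\alpha t}\dot\varphi_\varepsilon$ type quantities after differentiating the flow in $t$. The heart of the matter is a $C^2$ estimate on compact subsets of $X\setminus D$. I would apply the maximum principle to
$$Q_\varepsilon = \log \tr_\omega \omega_{\varphi_\varepsilon} - A\varphi_\varepsilon + \lambda \log |s|_h^2,$$
with $A, \lambda$ universal. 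Since $Q_\varepsilon \to -\infty$ along $D$, its maximum is attained in $X\setminus D$, and the standard Chern-Lu calculation, combined with the subsolution structure of \eqref{eq:cond_2} and the two bounds in \eqref{eq:cond_1} (the lower bound $\theta \geq C_0 |s|^{2\gamma}\omega$ provides quantitative ellipticity away from $D$, while $\theta - \varepsilon_0 R_h \geq C_0 \omega$ absorbs the curvature contribution $-\lambda R_h$ coming from the logarithmic weight), yields a bound on $\tr_\omega \omega_{\varphi_\varepsilon}$ on compact subsets of $X \setminus D$ uniform in $\varepsilon$ and $t$. Evans-Krylov and a parabolic Schauder bootstrap then produce $C^{k,\alpha}_{loc}(X\setminus D)$ estimates to all orders, and diagonal extraction as $\varepsilon \to 0$ gives a limit $\varphi(t) \in \mathcal{H}^{weak}_\omega$ solving the degenerate twisted J-flow in $X \setminus D$ while being $L^\infty(X)$ globally.

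Uniqueness in $\mathcal{H}^{weak}_\omega$ I would handle by the parabolic maximum principle applied to the difference of two candidate solutions perturbed by $\eta \log |s|_h^2$ and then letting $\eta \downarrow 0$, which pushes any interior extremum away from $D$ and exploits the contracting nature of the nonlinearity. For the long time convergence, I would invoke the monotone decrease of Zheng's twisted $\mathcal{J}_\beta$-functional along the flow together with its boundedness below under \eqref{eq:cond_2}, which yields $\int_0^\infty \|\dot\varphi(t)\|_{L^2}^2\,dt < \infty$. Coupled with the $\varepsilon$-independent higher-order bounds on $X \setminus D$, this extracts a subsequential $C^\infty_{loc}(X\setminus D)$ limit $\varphi_\infty \in \mathcal{H}^{weak}_\omega$ which is stationary, hence a weak solution of the degenerate twisted J-equation; uniqueness of that weak solution promotes subsequential to full convergence.

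The main obstacle is the uniform second-order estimate on $X \setminus D$, which must hold simultaneously for all $\varepsilon \in (0,1]$ and $t \in [0,\infty)$. Producing a weight that (i) diverges to $-\infty$ on $D$ so the maximum falls in $X \setminus D$, (ii) is compatible with the linearization of the twisted J-flow (whose operator differs from the Monge-Ampère one through the $\beta \omega^n/\omega_\varphi^n$ term, introducing additional positive contributions to the linearization that must be exploited), and (iii) has its own negative curvature contribution absorbed by a fixed portion of $\theta$, is exactly what condition \eqref{eq:cond_1} provides; threading the constants $C_0, \gamma, \varepsilon_0$ through the Chern-Lu computation so that neither the $\tr_\omega \omega_{\varphi_\varepsilon}$ bound nor the $C^{k,\alpha}_{loc}$ estimates degenerate as $\varepsilon \to 0$ is where the real technical work of the theorem lies.
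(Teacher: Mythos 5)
Your overall architecture (regularize by $\theta_\varepsilon=\theta+\varepsilon\omega$, invoke Zheng's theorem for the smooth flows, prove $\varepsilon$-uniform a priori estimates, extract a $C^\infty_{loc}(X\setminus D)$ limit, and close with a perturbed maximum principle for uniqueness) matches the paper, and your $C^2$ strategy — a test function $\log\tr_\omega\omega_{\varphi_\varepsilon}-A\varphi_\varepsilon+\lambda\log|s|^2_h$, with the two clauses of \eqref{eq:cond_1} respectively furnishing ellipticity away from $D$ and absorbing the curvature of the Hermitian metric — is essentially Tsuji's trick as implemented in Lemma~\ref{lem:C2}. The $\dot\varphi$ bound and the convergence via monotonicity of $\J^\theta_{\omega,\beta}$ plus Arzel\`a--Ascoli also agree.

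The genuine gap is the $C^0$ estimate. You propose to obtain it ``from Ko\l{}odziej/Eyssidieux--Guedj--Zeriahi type pluripotential estimates applied to an auxiliary Monge--Amp\`ere equation,'' but you never exhibit such an equation, and there is no natural one in dimension $n\geq 3$: the twisted J-flow is an inverse $\sigma_{n-1}$ (plus a $\sigma_n^{-1}$) equation, not a Monge--Amp\`ere equation, and there is no pointwise relation giving a control of $\omega_\varphi^n/\omega^n$ from the flow. The reduction to a Monge--Amp\`ere equation is precisely the device of Song--Weinkove in \cite{SW12}, which is special to complex dimension $2$, and the paper explicitly identifies its failure in higher dimension as the main obstruction being resolved. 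The paper's replacement is a parabolic ABP argument adapted from \cite{PT,Sze}: a purely algebraic lemma (Lemma~\ref{lem:key_C0}) shows that the subsolution inequality $c_\beta-\sum_{j\neq k}\mu_j\geq\delta_0$ forces a uniform bound on the eigenvalues of $\omega_\varphi$ at points where $\lambda_\omega(\omega_\varphi)\geq(1-\delta)\mathbf{I}$ and $\partial_t\varphi\geq-\delta$; the Tso/ABP inequality localizes the minimum to the set where these constraints hold, giving a lower bound for $\varphi_\varepsilon$; and the upper bound then comes from a Harnack-type inequality exploiting the decrease of $\J^{\theta_\varepsilon}_{\omega,\beta}$, the normalization $I_\omega(\varphi_\varepsilon)=0$, and the $L^1$-compactness of $\omega$-psh functions. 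The crucial point — flagged in both proofs the paper offers, including the elliptic alternative via Proposition~\ref{prop:C0_elliptic} — is that every constant must be controlled by norms with respect to $\omega$, not $\theta$, since $\theta$ degenerates. Your sketch as written does not close this step.

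A smaller point: you infer full convergence from ``uniqueness of that weak solution,'' but neither you nor the paper proves uniqueness of the stationary degenerate equation; the paper gets full convergence through $\dot\varphi(t)\to 0$ in $C^\infty_{loc}(X\setminus D)$ (following the contradiction argument of \cite{FLSW}) combined with the uniform higher-order bounds. Your route would require an additional comparison principle for weak solutions of the limiting equation.
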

When $\beta=0$, this theorem confirms the expectation of Song-Weinkove \cite[Remark 3.1]{SW12}.

 \medskip
 Theorem \ref{thm:1}
also provides  a uniform lower bound for the twisted $\J$-functional $\J^\theta_{\omega,\beta}$. 

 \begin{corollary}\label{cor:j_bounded_below}
Let $(X, \om)$ be a compact K\"ahler manifold and $\beta$  a non-negative constant. Assume that $\theta$ is a closed semi-positive  $(1,1)$-form satisfying  conditions \eqref{eq:cond_1}
 and \eqref{eq:cond_2}.
  Then there exists a constant $B$ depending only on $X,\omega,\theta$ such that 
  $$\J^\theta_{\omega, \beta}(\f)\geq B, \quad \forall\f \in \mathcal{H}_\omega.$$
\end{corollary}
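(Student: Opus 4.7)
The plan is to use the fact that $\J^\theta_{\omega,\beta}$ serves as a Lyapunov functional for the twisted J-flow, together with Theorem \ref{thm:1}, to obtain the lower bound. Recall that $\J^\theta_{\omega,\beta}$ is defined on $\mathcal{H}_\omega$ by the first variation
\[
\delta \J^\theta_{\omega,\beta}(\f)(\delta\f) \;=\; \int_X \delta\f \left(\tr_{\omega_\f}\theta + \beta \frac{\omega^n}{\omega_\f^n} - c_\beta\right) \omega_\f^n.
\]
A short direct computation, using the definition of $c_\beta$, shows on the one hand that $\J^\theta_{\omega,\beta}(\f + a) = \J^\theta_{\omega,\beta}(\f)$ for every constant $a \in \R$ (translation invariance), and on the other hand that along any solution $\f(t)$ of the twisted J-flow
\[
\frac{d}{dt}\J^\theta_{\omega,\beta}(\f(t)) \;=\; -\int_X \dot\f^2 \, \omega_{\f(t)}^n \;\leq\; 0.
\]
Thus $\J^\theta_{\omega,\beta}$ is monotone non-increasing along the flow.

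Next, for an arbitrary $\f_0 \in \mathcal{H}_\omega$, the plan is to run the degenerate twisted J-flow starting from $\f_0$. By Theorem \ref{thm:1} this flow exists for all time, is uniformly bounded in $L^\infty(X)$ (an estimate established in the proof of the theorem via Kolodziej-type arguments exploiting the compatibility conditions \eqref{eq:cond_1}), and converges in $C^\infty_{loc}(X \setminus D)$ to some $\f_\infty \in \mathcal{H}_\omega^{weak}$. Combined with the uniform $L^\infty$ bound, this convergence is strong enough to pass to the limit in the Monge--Amp\`ere-type integrals defining $\J^\theta_{\omega,\beta}$, by standard Bedford--Taylor continuity results for bounded $\omega$-plurisubharmonic potentials. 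Hence by monotonicity
\[
\J^\theta_{\omega,\beta}(\f_0) \;\geq\; \lim_{t \to \infty} \J^\theta_{\omega,\beta}(\f(t)) \;=\; \J^\theta_{\omega,\beta}(\f_\infty).
\]

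To finish, one argues that the right-hand side is independent of $\f_0$. Any two weak solutions of the degenerate twisted J-equation differ by an additive constant: when $\beta > 0$ this is a maximum-principle argument applied at a point where the difference between two solutions is extremized, while when $\beta = 0$ it follows from the convexity of $\J^\theta_\omega$ along bounded geodesics in $\mathcal{H}_\omega^{weak}$, whose critical points are then unique modulo constants. Combined with the translation invariance established above, the quantity $B := \J^\theta_{\omega,\beta}(\f_\infty)$ depends only on $X,\omega,\theta$ (and $\beta$), which yields the desired uniform lower bound $\J^\theta_{\omega,\beta}(\f) \geq B$. The main technical obstacle is justifying the continuity of $\J^\theta_{\omega,\beta}$ along the flow up to $t=\infty$, which is where the uniform $L^\infty$ control of $\f(t)$ coming from Theorem \ref{thm:1} becomes essential; uniqueness of the weak solution, while standard in the smooth setting, also requires some care in the present degenerate framework.
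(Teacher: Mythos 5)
The first half of your argument—translation invariance of $\J^\theta_{\omega,\beta}$, monotonicity along the flow, and the inequality $\J^\theta_{\omega,\beta}(\f_0) \geq \J^\theta_{\omega,\beta}(\f_\infty)$ (via passing to the limit along the flow, as in \cite[Lemma 3.2]{FLSW})—matches the paper. The gap is in the final step: you invoke uniqueness up to additive constant of weak solutions to the degenerate twisted J-equation to conclude that $\J^\theta_{\omega,\beta}(\f_\infty)$ is a fixed constant. This uniqueness is not established in the paper, and it is not as routine as you suggest. For $\beta>0$ the ``maximum-principle argument'' does not transfer straightforwardly from the parabolic to the stationary setting: in the parabolic uniqueness proof the term $\partial_t$ is what produces the effective inequality, whereas in the elliptic case one is left with a second-order operator with no sign structure; the Tsuji barrier $u \pm \delta\log|s|_h^2$ forces the extremum into $X\setminus D$ but does not by itself yield a contradiction, and a direct integration-by-parts argument requires the positivity of the form $c_\beta\sum\omega_\f^j\wedge\omega_\psi^{n-1-j}-n\theta\wedge\sum\omega_\f^j\wedge\omega_\psi^{n-2-j}$, i.e.\ that both $\omega_\f$ and $\omega_\psi$ lie in the relevant cone, which is not free in the degenerate limit. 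For $\beta=0$ the appeal to convexity of $\J^\theta_\omega$ along bounded weak geodesics in $\mathcal{H}^{weak}_\omega$ is a separate nontrivial result, not available in the paper.

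The paper sidesteps uniqueness entirely. It applies Proposition \ref{prop:C0_elliptic}, an a priori $C^0$ estimate for the non-degenerate approximating stationary equations whose constant depends only on $X,\omega,\|\theta\|_{C^0(X,\om)},\beta$ (and crucially not on a lower bound for $\theta$ nor on $\f_0$). This gives a uniform bound on $\mathrm{Osc}(\f_{\e,\infty})$, hence on $\mathrm{Osc}(\f_\infty)$, independent of the initial data. Translation invariance then lets one normalize $\sup_X\f_\infty=0$, giving a uniform $L^\infty$ bound, and $\J^\theta_{\omega,\beta}(\f_\infty)$—a finite sum of integrals of $\f_\infty$ against mixed Monge--Amp\`ere measures of $\omega$ and $\omega_{\f_\infty}$—is then bounded below by a constant depending only on $X,\omega,\theta,\beta$, with no need to identify $\f_\infty$ uniquely. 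You should replace the uniqueness claim with this oscillation estimate; as written the proof has a genuine hole.
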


Finally, we have the following criterion for the properness of the  Mabuchi K-energy twisted by a semipositive  $(1,1)$-form. 
\begin{corollary}\label{cor:twisted_cscK}
Let $X$
 be a compact K\"ahler manifold and $\eta$ 
 be a smooth closed semipositive  $(1,1)$-form. Assume that  $\theta\in -c_1(X)+ [\eta]$  is a semi-positive closed $(1,1)$-form satisfying \eqref{eq:cond_1}.
Suppose that 
\begin{equation}\label{eq:proper_ent}
    \M_{ent}(\f)\geq \alpha_0 J_\om (\f)-C,
\end{equation}
for some $\alpha_0>0$,
where $\M_{ent}$ is the entropy part in the formula of the Mabuchi functional and $J_\om$ is the Aubin's J-functional (see section \ref{sect:Mabuchi}).   
For any $\beta \in [0, \alpha_0 )$, denote by $\mathcal{C}$  the cone of all K\"ahler classes $[\omega_0]$ such that there exists a K\"ahler form $\omega \in [\omega_0]$ satisfying

\begin{equation}\label{eq:cond_2_general_type}
    \left( \left[ \frac{n(-c_1(X)+[\eta])\cdot [\omega]^{n-1}}{[\omega]^n}+ \beta \right]\omega -(n-1)\theta \right)\wedge  \omega^{n-2}>0,
\end{equation}
as an $(n-1, n-1)$-form.

Then the  Mabuchi functional twisted by $\eta$ is proper on every class in  $\mathcal{C}$.  Therefore by \cite[Theorem 4.1]{CC21b} there exists a twisted  cscK metric in $[\om]$ which satisfies 
\begin{equation}
    S(\om_\f)= \underline{S}+ \tr_{\om_\f}\eta -\underline{\eta},
\end{equation}
where
$$\underline{S} = n\frac{c_1(X).[\omega]^{n-1}}{[\omega]^n} \quad \text{and }\quad \underline{\eta} = n\frac{[\eta].[\omega]^{n-1}}{[\omega]^n}.$$
\end{corollary}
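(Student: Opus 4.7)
The plan is to deduce properness by combining the entropy assumption \eqref{eq:proper_ent} with the lower bound on $\J^\theta_{\om,\beta}$ provided by Corollary \ref{cor:j_bounded_below}, via the Chen-Tian decomposition of the $\eta$-twisted Mabuchi functional. Given $[\om_0]\in\cC$, I first pick $\om\in[\om_0]$ realizing \eqref{eq:cond_2_general_type}. Since $[\theta]=-c_1(X)+[\eta]$, the bracketed constant in \eqref{eq:cond_2_general_type} equals $c_\beta=n[\theta]\cdot[\om]^{n-1}/[\om]^n+\beta$, so the inequality is precisely \eqref{eq:cond_2} with $\hat\om=\om$. Corollary \ref{cor:j_bounded_below} then provides a uniform lower bound $\J^\theta_{\om,\beta}(\f)\geq B$ for all $\f\in\mathcal{H}_\om$.

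Next, I apply the Chen-Tian formula to $\M_\eta$. Writing $-{\rm Ric}(\om)+\eta=\theta+\dd\psi$ for a smooth $\psi$ (both sides represent $-c_1(X)+[\eta]$), one obtains
$$\M_\eta(\f)=\M_{ent}(\f)+\widetilde{\J}^\theta_\om(\f)+O(1),$$
where $\widetilde{\J}^\theta_\om$ is the translation-invariant primitive of $\int_X\dot\f\,(\tr_{\om_\f}\theta-\underline\theta)\,\om_\f^n$; the $O(1)$ correction is $\int_X\psi(\om_\f^n-\om^n)$, which is bounded by $2\|\psi\|_\infty[\om]^n$ uniformly in $\f$. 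A direct comparison of the functional gradient of the twisted flow (2) with that of $\widetilde{\J}^\theta_\om$, together with the classical identity $J_\om(\f)=\frac{1}{[\om]^n}\int_X\f\,\om^n-E_\om(\f)$ for the Aubin functional and the Monge-Amp\`ere energy $E_\om$, then yields the splitting
$$\J^\theta_{\om,\beta}(\f)=\widetilde{\J}^\theta_\om(\f)+\beta\,J_\om(\f).$$

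Combining the two identities with \eqref{eq:proper_ent} and Corollary \ref{cor:j_bounded_below} produces
$$\M_\eta(\f)=\M_{ent}(\f)+\J^\theta_{\om,\beta}(\f)-\beta\,J_\om(\f)+O(1)\geq(\alpha_0-\beta)J_\om(\f)-C',$$
which is the required properness on $[\om_0]$, as $\beta<\alpha_0$. The twisted cscK metric then follows from \cite[Theorem~4.1]{CC21b} (Chen-Cheng). The main obstacle is careful bookkeeping: tracking cohomological normalizations through the Chen-Tian formula so that the residual error is genuinely $O(1)$ rather than proportional to $J_\om$, and verifying that the splitting of $\J^\theta_{\om,\beta}$ contributes exactly $-\beta\,J_\om$ with the correct sign. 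Once these conventions are matched, no additional analytic input beyond Corollary \ref{cor:j_bounded_below} is required.
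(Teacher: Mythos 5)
Your proposal is correct and follows essentially the same route as the paper: decompose the $\eta$-twisted Mabuchi functional via Chen--Tian, replace $\J^{-\mathrm{Ric}(\om)+\eta}_\om$ by $\J^\theta_\om$ up to a bounded error using cohomological invariance, invoke Corollary \ref{cor:j_bounded_below} for the lower bound on $\J^\theta_{\om,\beta}$, and absorb the $\beta J_\om$ term using $\alpha_0>\beta$. The only cosmetic difference is that you compute the cohomological $O(1)$ error explicitly via the telescoping identity $\int_X\psi(\om_\f^n-\om^n)$, whereas the paper simply cites \cite[Proposition 22]{CS}.
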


It follows from \cite[p.95]{Tian00} that \eqref{eq:proper_ent} holds for  $\alpha_0=\alpha_X([\om])$, namely  Tian's alpha invariant.  Corollary \ref{cor:twisted_cscK} thus  gives a criterion in line with recent results due to \cite{Der, LSY, Zhe15,SD} (see Proposition \ref{prop:twisted_cscK_crt_2} and Remark \ref{remark_final}). In particular, when $\eta=0$, this extends the result in \cite{Zhe15} for the properness of the Mabuchi K-energy on compact K\"ahler manifolds with ample canonical bundle  to  minimal models of general type.

A  key ingredient of the proof of Theorem \ref{thm:1} is a uniform $C^0$ estimate for the weak solution of the degenerate twisted J-flow.  In \cite{SW12}, for K\"ahler surfaces,  the authors reduced the degenerate J-equation to a degenerate Monge-Amp\`ere equation then obtained a uniform $C^0$ bound using  Yau's estimate \cite{Yau}. It seems delicate  to extend this approach to higher dimension.

In this work,  we adapt a $C^0$ estimate from \cite{PT} which is based on the parabolic Alexandrov-Bakelmann-Pucci (ABP) maximum principle (see  \cite{Sze} for the elliptic version). We can not apply the $C^0$ estimate from \cite{PT} directly since $\theta$ is assumed to be {\sl positive} in  \cite{PT} while it is  merely semi-positive in our case.  In \cite{PT}, with $\theta>0$, the parabolic equation has been reformulated as $$\partial_t \f =f(\lambda_\theta(\om_\f)),$$
where $\lambda_\theta( \om_\f) \in \mathbb{R}^n$ is the vector of eigenvalues of $\om_\f$  in  normal coordinates with respect to $\theta$.
In our case,   we use  normal  coordinates for $\omega$ instead of $\theta$,  so that $\omega_\f $ is diagonal with entries $\lambda_1,\ldots, \lambda_n$.  The  twisted J-flow can then be written in a new form as
$$  \p_t \f =c_\beta- \sum _{k=1}^n\dfrac{\mu_k}{\lambda_k} -\frac{\beta}{\lambda_1\ldots \lambda_n},$$
where    $ \mu_1, \ldots ,\mu_n$ denote the diagonal elements of $\theta$ in these coordinates. Then the $C^0$ estimate is deduced using the ABP estimate, a Harnack type inequality as well as a local $C^2$ estimate under the positivity condition \eqref{eq:cond_2} (cf. Lemma \ref{lem:key_C0}).

Another ingredient of the proof of Theorem 1.1 is a uniform $C^2$ estimate away from  a divisor (Lemma \ref{lem:C2}). We use the maximal principle with a well-known trick due to Tsuji \cite{Tsu},  but we do not  use the Phong-Sturm's trick (cf. \cite{PS10}) as in \cite{SW12}. In our case, some extra terms also appear in the computation due to the new term in the twisted J-flow. 

\medskip
The same idea for $C^0$-estimate  can also be used to  give a uniform estimate  for the  twisted $\sigma_{n-1}$ inverse equation   where the the uniform bound depends on  $\|\theta\|_{C^0(X,\om)}$ but not on  $\|\om\|_{C^0(X,\theta)}$ as in \cite{Sze}.

\begin{proposition}
Let $(X,\om)$ be a compact K\"ahler manifold  and $\theta$ be another K\"ahler metric. Let  $\psi, \rho \in C^\infty (X)$ be smooth   functions on $X$ with $\psi>0$. 
Assume that there is $\omega'=\om+\dd \underline{u}\in [\omega]$ such that 
\begin{equation}
    (\psi\omega'-(n-1)\theta)\wedge \omega'^{n-2}>0,
\end{equation}
as an $(n-1, n-1)$ form. 
Assume $u$ is a smooth solution
of the equation \begin{equation}\label{eq:j_eq}
    n\omega^{n-1}_u\wedge\theta + \rho \om^n  =\psi \om _u^{n}, \quad \sup_X (u-\underline{u})=0.
    \end{equation}
Then there exists a constant $C>0$ which only depends  on $X,\omega,\|\underline{u}\|_{C^2(X,\om)},  \|\psi\|_{L^\infty(X)},$ $ \|\rho\|_{L^\infty(X)}$  and $\|\theta\|_{C^0(X,\om)}$ such that $$\|u\|_{L^\infty(X)}\leq C.$$ 
\end{proposition}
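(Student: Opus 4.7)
The plan, as hinted in the discussion preceding the proposition, is to carry out the ABP-based $C^0$ estimate of \cite{Sze,PT} but in $\om$-normal coordinates rather than $\theta$-normal coordinates, so that the resulting constants are controlled by $\|\theta\|_{C^0(X,\om)}$ and not by $\|\om\|_{C^0(X,\theta)}$. First, at any point $p\in X$ I would fix coordinates centred at $p$ in which $\om(p) = i\sum_k dz_k\wedge d\bar z_k$ and $\om_u(p)$ is diagonal with eigenvalues $\la_1,\dots,\la_n>0$; equation \eqref{eq:j_eq} then reads
$$\psi(x) = \sum_{k=1}^n \frac{\mu_k(x)}{\la_k}+\frac{\rho(x)}{\la_1\cdots\la_n},$$
where $\mu_k = \theta_{k\bar k}$ are the diagonal entries of $\theta$ in these coordinates and satisfy $0<\mu_k\leq \|\theta\|_{C^0(X,\om)}$. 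This realises the operator as $f(\la;x) = \sum_k \mu_k(x)/\la_k + \rho(x)/(\la_1\cdots\la_n)$ with coefficients controlled directly by $\|\theta\|_{C^0(X,\om)}$.

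Next, I would check that the hypothesis $(\psi\om' - (n-1)\theta)\wedge \om'^{n-2}>0$ is equivalent, by the standard linear-algebra translation (see \cite{Sze,Zhe15}), to $\underline u$ being a $\mathcal C$-subsolution in the sense of Sz\'ekelyhidi--Guan for the operator $f$ above. The usual consequence of this condition then provides constants $R,\kappa>0$, depending only on the quantities listed in the proposition, such that for every $x\in X$ and every $\la\in \R^n_{>0}$ with $f(\la;x) = \psi(x)$ and $|\la|\geq R$, one has $\la_i - \la'_i(x)\geq \kappa$ for all $i$ after reordering, where $\la'_i(x)$ are the eigenvalues of $\om'$ at $x$.

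With this lemma in hand, the $C^0$ bound follows by the elliptic ABP argument. Let $p\in X$ be a point where $u-\underline u$ attains its minimum $-L$ (so $L\geq 0$ by the normalisation $\sup_X(u-\underline u)=0$); working in a small coordinate ball $B$ around $p$ and considering a perturbed barrier of the form $w(x) = (u-\underline u)(x) + L + \varepsilon|x|^2$, an ABP-type inequality yields
$$L^{2n}\leq C\int_{\mathcal K}\det D^2 w\, dx,$$
where $\mathcal K$ is the contact set of the convex envelope of $w$. At points of $\mathcal K$ the real Hessian $D^2 u$ is bounded below, so the equation together with the $\mathcal C$-subsolution lemma forces the eigenvalues $\la_i(\om_u)$ to stay uniformly bounded from above at each such point, hence $\det D^2 w$ is uniformly bounded on $\mathcal K$, which gives $L\leq C$.

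The main obstacle is to verify that every constant appearing in the $\mathcal C$-subsolution lemma and in the subsequent ABP comparison depends on $\|\theta\|_{C^0(X,\om)}$ and not on the reciprocal norm $\|\om\|_{C^0(X,\theta)}$. This is exactly what is gained by switching from $\theta$-normal to $\om$-normal coordinates: the coefficients $\mu_k$ of the reformulated operator are controlled directly by $\|\theta\|_{C^0(X,\om)}$, and the passage from the PDE to the eigenvalue equation introduces no factor of $\|\om\|_{C^0(X,\theta)}$. Apart from this bookkeeping, the argument is a faithful adaptation of Sz\'ekelyhidi's elliptic ABP machinery \cite{Sze}, combined with the parabolic variant used in \cite{PT}.
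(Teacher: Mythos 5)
Your proposal captures the essential idea of the paper's argument: work in $\om$-normal coordinates (rather than $\theta$-normal coordinates) so the coefficients $\mu_k$ of the reformulated scalar operator are bounded by $\|\theta\|_{C^0(X,\om)}$, then run the elliptic ABP machinery (a perturbed barrier $w$, the lower contact set, and the bound from the positivity condition on the eigenvalues of $\om_u$ on that set). The structure and the conclusion match.

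The one substantive difference is that you invoke the abstract Sz\'ekelyhidi--Guan $\mathcal{C}$-subsolution framework as a black box, while the paper proves the needed eigenvalue bound directly and elementarily (Lemma \ref{lem:key_C0}): in $\om$-normal coordinates the positivity hypothesis becomes $\psi-\sum_{j\neq k}\mu_j/\lambda'_j\geq\delta_0$ for each $k$, and one argues by contradiction that if some $\lambda_k\to\infty$ on the contact set then $\sum_{j\neq k}\mu_j/\lambda_j$ can no longer reach $\psi+O(\delta)$. Both routes are fine; the paper's is self-contained and removes any ambiguity about how the subsolution constants depend on $\theta$, which is precisely the point of the proposition. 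Also note the paper first normalizes $\underline{u}\equiv 0$ by replacing $\om$ with $\om'$, which streamlines the ABP step.

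One imprecision in your sketch: the stated consequence of the $\mathcal{C}$-subsolution condition, that $f(\lambda;x)=\psi(x)$ and $|\lambda|\geq R$ imply $\lambda_i-\lambda'_i(x)\geq\kappa$ for all $i$, is not the correct statement (and points in the wrong direction). What the subsolution condition yields is that the level set $\{f=\psi\}$ intersected with $\lambda'-\delta\mathbf{1}+\Gamma_n$ is contained in a ball of uniform radius $R$; it is this upper bound on the eigenvalues that feeds into $\det D^2 w\leq C$ on the contact set. Your subsequent conclusion (``forces the eigenvalues $\lambda_i(\om_u)$ to stay uniformly bounded from above'') is the correct one, so the error in the intermediate statement does not propagate, but it should be fixed for the argument to read correctly.
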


\medskip
\noindent
{\bf Acknowledgements.} 
The author is grateful to Vincent Guedj  for support, suggestions and encouragement. We also would like to thank Duong Hong Phong and   Zakarias Sjöström Dyrefelt for very useful discussions.
The author is  partially supported by   ANR-21-CE40-0011-01 (research project MARGE) and  ANR-11-LABX-0040 (research project HERMETIC). The author would like to thank the referee for useful comments and suggestions.

\section{Preliminaries}
\subsection{Mabuchi K-energy} \label{sect:Mabuchi}
Let $(X,\om)$ be a $n$-dimensional compact K\"ahler manifold.  We define the space $\mathcal{H}_\omega$ of K\"ahler potentials  by
$$\mathcal{H}_\omega = \{ \f\in C^\infty(X): \omega_\f:= \omega+\dd \f>0\}.$$
 The Mabuchi K-energy functional  was introduced by Mabuchi  \cite{Mab87} which is characterized by 
\begin{equation}
    \dfrac{d \M_\om(\f_t)}{ dt} =  - \int_X\dot \f_t  (S_{\f_t} - \underline{S} ) \dfrac{\omega^n_{\f_t}}{n!},
\end{equation}
for any path $(\f_t)$ in $\mathcal{H}_\omega$,
where $S_{\f_t}$ is the scalar curvature of $\om_{\f_t}$ and
$\underline{S} = n\frac{c_1(X).[\omega]^{n-1}}{[\omega]^n}.$
 The critical points of the Mabuchi K-energy are {\sl constant scalar curvature Kähler (cscK) metrics}.
 
The Chen-Tian  formula (see \cite{Che00, Tian00}),  expresses the Mabuchi K-energy  as the sum of an entropy part and a pluripotential part 

$$\mathcal{M}_\om(\f)=\M_{ent}(\f)+\M_{pp}(\f),$$
where 
$$\M_{ent}(\f)= \int_X \log \left(\frac{\omega_\f^n}{\omega^n} \right) \frac{\omega^n_\f  }{ n!} $$
 and 
 $$\M_{pp}(\f)=\J^{-Ric(\omega)}_\omega .$$
Here, the twisted $\J$-functional $\J^{\theta}_{\omega}$ associated to a $(1,1)$-form $\theta$ is defined by   

\begin{eqnarray} \label{eq:J_fct}
     \J^\theta_\omega(\f)&=&   \int_0^1\int_X \dot \f_t(n \theta\wedge \omega_{\f_t}^{n-1}  -\underline{\theta} \omega_{\f_t}^n)  \frac{dt}{n!}\\
     &=& \frac{1}{n!}\int_X \f \sum_{k=0}^{n-1}\theta \wedge \omega^k\wedge\omega_{\f}^{n-1-k}  -\frac{1}{(n+1)!}\int_X \underline{\theta} \f \sum_{k=0}^n \omega^k\wedge \omega^{n-k}_\f, \nonumber
\end{eqnarray}
where  $(\f_t)$ is any path in  $\mathcal{H}_\omega$ between 0 and $\f$ and
 $$\underline{\theta}:=\dfrac{n[\theta]\cdot [\omega]^{n-1}}{[\omega]^n}.$$

Let $\eta\geq 0$ be a smooth closes $(1,1)$-form, the {\sl twisted Mabuchi K-energy} is defined by
$\M_{\om}^\eta:= \M_{\om}+ \J^{\eta}_\om$. The critical points of $\M^\eta_\om$ satisfy the following equation
\begin{equation}\label{eq:tcscK}
    S(\om_\f)= \underline{S}+ \tr_{\om_\f}\eta -\underline{\eta},
\end{equation}
which defines  {\sl twisted cscK metrics} in $[\om]$ (cf. \cite{Fine04,Fine06,CC21b}).

A functional $\mathcal{F}$ is called  {\it coercive} if there are constants $C_1, C_2$ such that 
 \begin{equation}\label{cond:proper_def}
     \mathcal{F}(\f)\geq C_1 J_\omega (\f)-C_2,
 \end{equation}
  for all $\f \in \mathcal{H}_\omega$,  where $J_\om$ is     Aubin's  functional defined by
   \begin{eqnarray}
    J_\omega(\f) 
    &= & \int_0^1\int_X \dot \f_t(\om^n-\om^n_{\f_t}) \frac{dt}{n!}=\int_X \f \frac{\omega^n}{n!} - \frac{1}{(n+1)!}\int_X  \f \sum_{k=0}^n \omega^k\wedge \omega^{n-k}_\f,
\end{eqnarray}
where $(\f_t)$ is any path in  $\mathcal{H}_\omega$ between 0 and $\f$.

Coercivity is a strong form of  ``properness''.
The cscK metrics are critical points of the Mabuchi K-energy. 
  Tian \cite{Tian97} conjectured that the properness of Mabuchi functional implies the existence of cscK metrics. This conjecture was proved recently by Chen-Cheng  \cite{CC21a,CC21b}. The similar result for the existence of twisted cscK metrics was also proved  in \cite[Theorem 4.1]{CC21b}.

\begin{definition}
A functions $\phi: X\rightarrow \mathbb{R}\cup \{-\infty\}$ is {\it $\omega$-plurisubharmonic} ($\om$-psh for short) if it is locally given as the sum of a smooth and a plurisubharmonic function, and such that $\om+dd^c\phi\geq 0$ in the weak sense of currents.
Let $PSH(X,\omega)$ denote the set of all $\omega$-plurisubharmonic functions which are not identically $-\infty$. 
\end{definition} 
  
\noindent We remark that all functionals  above are also well-defined on $PSH(X, \om)\cap L^\infty(X)$. 
  
  \medskip
We now recall an observation due to Zheng \cite{Zhe15}.   Suppose that $\alpha_0$ is a   positive number satisfying
\begin{equation}\label{eq:proper_ent_1}
    \M_{ent}(\f)\geq \alpha_0 J_\om (\f)-C, \forall \f\in \mathcal{H}_\om
\end{equation}
  for some $C>0$, 
  then for any $\beta\in [0,\alpha_0)$ we have
  \begin{eqnarray}
       \M(\f)&\geq& \alpha_0 J_\om (\f)-C+ \J^{-Ric(\om)}_\om
       \\
       &=& (\alpha_0 -\beta) J_\om (\f) -C +\J^{-Ric(\om)}_{\om,\beta}\\
       &\geq& (\alpha_0 -\beta) J_\om (\f)  -C+ \inf_{\f\in \mathcal{H}_\om} \J^{-Ric(\om)}_{\om,\beta} ,
  \end{eqnarray}
  where $\J^{-Ric(\om)}_{\om,\beta}=\J^{-Ric(\om)}_\om+\beta J_\om.$
Therefore, if $\J^{-Ric(\om)}_{\om,\beta}$ is  uniformly bounded from below on $\mathcal{H}_\om$ then the Mabuchi K-energy is coercive. 

We remark that  \eqref{eq:proper_ent_1} holds for  $\alpha_0=\alpha_X([\om])$, namely Tian's alpha invariant (cf. \cite[p.95]{Tian00} or \cite[Lemma 4.1]{SW08}), defined by
\begin{equation*}
    \alpha_X([\om])=\sup\{\alpha>0: \exists C>0 \text{ such that } \int_Xe^{-\alpha(\f-\sup_X\f)}\om^n\leq C, \forall \f\in PSH(X,\om)\}.
\end{equation*}

In general, we define the functional  $  \J^{\theta}_{\om,\beta}:= \J^\theta_{\om} + \beta J_\om $. 
The critical point of $\J^{\theta}_{\om,\beta}$ satisfies a new fully nonlinear equation in $\mathcal{H}_\om$, namely  the {\it twisted J-equation}:

$$n\omega_\f^{n-1}\wedge \theta+ \beta\om^n = c_\beta\omega_\f^{n},$$ 
where
$$c_\beta=\dfrac{n[\theta]\cdot [\omega]^{n-1}}{[\omega]^n}+ \beta.$$ 

We also remark that this equation is similar to the equation (4) in \cite{CS} in which $\beta \om^n $ replaced by $\beta\theta^n$ assuming $\theta>0$. Moreover, this is also a special case of a general twisted J-equation introduced recently by Chen \cite{GaoChen}, Theorem 1.11, with $f=\beta\frac{\om^n}{\theta^n}$ and $\theta>0$. 

\subsection{Twisted J-flow}\label{section:twisted_j_flow}
Let $(X,\omega)$ be a K\"ahler manifold of dimension $n$ and  $\theta$ be a  smooth $(1,1)$-forms on $X$. For $\beta\in [0,\infty)$, consider the twisted  J-flow
\begin{equation}\label{eq:twisted_j_flow_0}
     \dfrac{\p \f }{\p t}= c_\beta-\dfrac{n\omega_{\f }^{n-1}\wedge \theta }{\omega_\f ^n}  -\beta \frac{\omega^n}{\omega_\f^n}, \quad \f|_{t=0} =\f_0.
\end{equation}
where $$c_\beta=\dfrac{n[\theta]\cdot [\omega]^{n-1}}{[\omega]^n}+\beta.$$ 
This  is the gradient flow for the twisted $\J$-functional  $\J^\theta_{\omega,\beta}=\J^\theta_{\om} + \beta J_\om$ above. 

\medskip
When $\theta>0$, we have the following result is due to Song-Weinkove \cite{SW08} for $\beta=0$ and Zheng \cite{Zhe15} for $\beta\geq 0$:
\begin{theorem}[\cite{SW08,Zhe15}]\label{thm:SW-Z}
Let $(X,\omega)$ be a K\"ahler manifold of dimension $n$ and  $\theta$ be a  closed {\sl positive } $(1,1)$-forms on $X$.
 Assume that there exists a metric $\hat \om\in [\omega]$ such that 
 \begin{equation}
    (c_\beta\hat \omega-(n-1)\theta)\wedge \hat\omega^{n-2}>0,
\end{equation}
where  $$c_\beta =\dfrac{n[\theta]\cdot [\omega]^{n-1}}{[\omega]^n}+\beta.$$
Then for any $\f_0\in \mathcal{H}_{\omega}$, the twisted J-flow \eqref{eq:twisted_j_flow_0} admits a unique solution $\f\in C^\infty(X\times[0, \infty))$, with $\f(t)  \in \mathcal{H}_\omega, \forall t\geq 0$.  

Moreover, $\f(t)\rightarrow \f_\infty$ in $C^\infty(X)$ as $t\rightarrow \infty$, where $\f_\infty \in 
 \mathcal{H}_\omega$   satisfies
$$ c_\beta \omega_{\f_\infty}^{n}  =  n\omega_{\f_\infty}^{n-1}\wedge\theta+\beta \om^n.$$
\end{theorem}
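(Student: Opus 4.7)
The plan is to follow the standard pattern for parabolic complex Monge--Amp\`ere type flows: short-time existence, uniform a priori estimates in $C^k$ for every $k$, long-time existence by continuation, and finally smooth convergence via monotonicity of the underlying functional. Since $\theta>0$ strictly and $\omega_{\f_0}>0$, the linearization of the right-hand side of \eqref{eq:twisted_j_flow_0} is uniformly elliptic near $\f_0$, so short-time existence in $\mathcal{H}_\omega$ follows from standard parabolic theory. The real content is to derive estimates independent of $t \in [0,\infty)$.

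The first step would be to control $\dot\f$. Differentiating the flow in $t$, the quantity $\dot\f$ satisfies a linear parabolic equation $\p_t \dot\f = L \dot\f$, where $L$ is the linearization of the right-hand side; since $\theta > 0$ this operator is uniformly elliptic, so the parabolic maximum principle yields $\|\dot\f(t)\|_{L^\infty} \leq \|\dot\f(0)\|_{L^\infty}$. Combined with the fact that $\J^{\theta}_{\omega,\beta}$ is strictly decreasing along the flow (with derivative $-\int_X \dot\f^2\,\omega_\f^n/n!$) and a Green function or Moser-type argument against $\om_\f^n$, this gives a uniform $L^\infty$ bound on the suitably normalized $\f(t)$.

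The heart of the proof is the uniform second-order estimate. Let $\hat\f \in \mathcal{H}_\omega$ be a potential with $\hat\omega = \omega + \dd \hat\f$ given by \eqref{eq:cond_2}. Following the Song--Weinkove strategy, one applies the maximum principle to an auxiliary quantity of the form $Q = \log \mathrm{tr}_{\hat\omega} \omega_\f + f(\f - \hat\f)$ with $f$ convex and a suitable slope. The cone condition $(c_\beta \hat\omega - (n-1)\theta)\wedge \hat\omega^{n-2} > 0$ is precisely the statement that $\hat\f$ is a $C$-subsolution in the sense of Sz\'ekelyhidi and a parabolic $C$-subsolution in the sense of Phong--T\^o; this provides the strictly positive terms in the computation of $(\p_t - L)Q$ at an interior maximum that are required to absorb the bad commutator terms and, crucially, the extra contribution of the $\beta\,\om^n/\om_\f^n$ term not present in the untwisted flow. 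This yields a uniform bound $n + \Delta_\omega \f \leq C$. Combined with $\theta > 0$, this gives uniform ellipticity of the linearized equation, so Evans--Krylov plus Schauder bootstrapping give $C^k$ bounds for every $k$, uniform in $t$. Long-time existence then follows by iterating the short-time existence.

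Finally, for $t \to \infty$, uniform smoothness and the monotonicity of $\J^{\theta}_{\omega, \beta}$ imply that $\int_0^\infty \int_X \dot\f^2\,\om_\f^n/n!\,dt < \infty$, so along a subsequence $t_k \to \infty$ one has $\f(t_k) \to \f_\infty$ smoothly with $\f_\infty$ satisfying the twisted J-equation; uniqueness (modulo additive constants) of such a solution under the cone condition, proved by comparing any two solutions via the concavity of the operator $F(\lambda) = c_\beta - \sum_k \mu_k/\lambda_k - \beta/\prod_k \lambda_k$ in $\log \lambda$, upgrades subsequential to full convergence. The main obstacle is the $C^2$ estimate: the cone condition enters only through a delicate algebraic inequality on the eigenvalues of $\omega_\f$ relative to $\hat\omega$, and the new $\beta$-term must be handled simultaneously with the $\mathrm{tr}_{\omega_\f}\theta$ term without losing the strict positivity coming from \eqref{eq:cond_2}.
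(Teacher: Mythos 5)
This theorem is quoted in the paper from Song--Weinkove \cite{SW08} (case $\beta=0$) and Zheng \cite{Zhe15} ($\beta\geq 0$); the paper gives no proof of its own, so there is nothing in the text to compare against directly. Your sketch correctly reproduces the overall architecture of those proofs: short-time existence, the $\dot\f$ bound via the linear parabolic maximum principle (which in the non-degenerate case immediately gives $\omega_\f\geq c\theta\geq c'\omega$), a second-order estimate obtained by applying the maximum principle to $\log\tr_{\hat\omega}\omega_\f - A(\f-\hat\f)$ with the cone condition \eqref{eq:cond_2} supplying the strict positivity that absorbs the bad terms (including the new $\beta\,\omega^n/\omega_\f^n$ contribution), Evans--Krylov and Schauder for higher order estimates, and convergence from monotonicity of $\J^\theta_{\omega,\beta}$ plus a bootstrap for $\dot\f\to 0$. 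It is worth noting that the paper's own Section \ref{sect:est} proves uniform estimates for the approximating flows that recover this theorem as a special case, but the $C^0$ estimate there is obtained differently: a lower bound on $\f$ from the parabolic ABP maximum principle of Phong--T\^o (Lemma \ref{lem:lower_bd}), combined with a Harnack-type inequality $\sup_X\f\leq C - C'\inf_X\f$ coming from monotonicity of $\J^\theta_{\omega,\beta}$ and compactness of $\omega$-psh functions (Lemma \ref{lem:C0}), or alternatively by comparing with the stationary solution of the elliptic twisted J-equation. The one weak point in your sketch is precisely this step: saying that $\J$-monotonicity plus ``a Green function or Moser-type argument against $\om_\f^n$'' gives the $L^\infty$ bound elides the fact that the monotonicity only controls $\sup_X\f$ in terms of $\inf_X\f$, and a genuinely separate ingredient (the ABP lemma, a comparison principle against the elliptic solution, or a Yau/Moser iteration with the correct normalization) is needed to close the two-sided bound. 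This is not a wrong approach, but the way you have stated it does not yet constitute a proof of the $C^0$ estimate.
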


\medskip
 We are now interested in the degenerate case where $\theta$ is merely semi-positive. 
We  have the following  local expression  for the degenerate twisted J-flow which will be used to construct  weak solutions.
We use  normal  coordinates for $\omega$ so that $\omega_\f $ is diagonal with entries $\lambda_1,\ldots, \lambda_n$. Let  $ \mu_1, \ldots ,\mu_n$ denote the diagonal elements of $\theta$ in these coordinates. The twisted J-flow can then be written as
$$  \p_t \f =c_\beta- \sum _{k=1}^n\dfrac{\mu_k}{\lambda_k} -\frac{\beta}{\lambda_1\ldots \lambda_n},$$
or $$\sum _{k=1}^n\dfrac{\mu_k}{\lambda_k}+\frac{\beta}{\lambda_1\ldots \lambda_n}=c_\beta-\p_t \f.$$
If there is a uniform constant $\delta_0>0 $ such that

\begin{equation}\label{eq:pos_cond_e}
   c_\beta \omega^{n-1}-(n-1)\omega^{n-1}\wedge \theta \geq \delta_0 \omega^{n-1},
\end{equation}
then in the  coordinates above, this inequality can be rewritten  as
\begin{equation}\label{eq:pos_condition_2}
c_\beta -\sum_{j\neq k} \mu_j \geq \delta_0. 
\end{equation}

\section{Uniform estimates}\label{sect:est}
\subsection{Approximating the degenerate twisted J-flow} Let $(X,\om)$ be a compact K\"ahler manifold and $\theta$ be a smooth $(1,1)$-form. 
We first assume that  
\begin{equation}\label{eq:theta_semipositive}
 \theta\geq 0 \text{ and  } \int_X \theta\wedge \om^{n-1}>0.   
\end{equation}
Set $\theta_\e= \theta+\epsilon\omega$, for $\e\geq 0$  and,  define 
 $$c_{\e}:=\dfrac{n[\theta_\e]\cdot [\omega]^{n-1}}{[\omega]^n}+\beta.$$
 Then $\theta_\e$ is positive for all $\e> 0$. 
 By our assumption in Theorem \ref{thm:1},  there exists $\hat \omega\in [\omega]$ such that
 \begin{equation}\
    (c_\beta\hat\omega-(n-1)\theta)\wedge \hat\omega^{n-2}>0. 
\end{equation}
As  $\e\rightarrow 0$, $c_{\e}\rightarrow c_\beta$, so we may choose $\e_0>0$ such that for $\e\in [0,\e_0]$ we have
\begin{equation}\label{eq:cond_2_e}
    (c_{\e}\hat\omega-(n-1)\theta_\e)\wedge \hat\omega^{n-2}\geq \delta_0\hat\omega^{n-1}>0. 
\end{equation}
for a uniform constant $\delta_0>0$ independent of $\e$. 
Since $\hat\omega\in[\omega]$, $\hat\omega=\omega+\dd \phi$ for some $\phi\in C^\infty(X)$, we can replace $\omega$ by $\hat\omega$ and still denote it by $\omega$ to get the inequality:
\begin{equation}\label{eq:cond_2_red_e}
    (c_{\e}\omega-(n-1)\theta_\e)\wedge \omega^{n-2}\geq \delta_0\omega^{n-1}. 
\end{equation}

 We now consider  the family of twisted J-flows
\begin{equation}\label{eq:J-flow_e}
    \dfrac{\p \f_\e }{\p t}= c_{\e}-\dfrac{\omega_{\f_\e }^{n-1}\wedge \theta_\epsilon }{\omega_{\f_\e} ^n}-\beta \frac{\omega^n}{\omega_{\f_\e}^n}, \quad \f|_{t=0}=\f_0,
\end{equation}
for $\beta\in [0,\infty)$ and $\e\in(0,\e_0]$.

\medskip
It follows from Theorem \ref{thm:SW-Z}  that  for any $\e\in (0,\e_0]$, with the condition \eqref{eq:cond_2_red_e},  the twisted J-flow \eqref{eq:J-flow_e} exists for all time and converges to the unique solution of the twisted J-equation 
\begin{equation}
    \omega_{u_\e}^{n-1}\wedge \theta_\e+\beta\om^n = c_{\e}\omega_{u_\e}^{n}  . 
\end{equation}
In the next sections we prove uniform $L^\infty$ bounds for $\f_\e, \dot\f_\e$  on $X$  when $\theta$ is  semi-positive with $ \int_X \theta\wedge \om^{n-1}>0$,  and $C^\infty$ estimates for $\f_\e$ away from $D$ when we assume further that $\theta$ satisfies the condition \eqref{eq:cond_2}.
\subsection{Estimate for the time  derivative}
The $C^1$ estimate for  $\f_{\e}$ in the time variable follows easily from the maximal principle.
\begin{lemma}\label{lem:c1_t}
There exists a uniform constant $C$ such that 
$$\| \p_t \f_\e\|_{L^{\infty}(X)}\leq C$$
and 
\begin{equation}\label{eq:chi_omega_e}
\omega_{\f_\e}\geq \frac{1}{C}\theta_\e. 
\end{equation}
\end{lemma}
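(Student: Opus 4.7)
The plan is to view $H := \partial_t \varphi_\epsilon$ as the solution of a linear parabolic equation with no zeroth-order term, obtained by differentiating \eqref{eq:J-flow_e} in $t$, and then to derive the matrix inequality $\omega_{\varphi_\epsilon}\geq \theta_\epsilon/C$ directly from the bound on $H$ by inspecting the flow equation pointwise.

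First I would differentiate \eqref{eq:J-flow_e} in $t$. Using the standard identities
\[
\partial_t \tr_{\omega_{\varphi_\epsilon}}\theta_\epsilon = -g_{\varphi_\epsilon}^{\bar j k} g_{\varphi_\epsilon}^{\bar l i}(\theta_\epsilon)_{i\bar j}\, H_{k\bar l},
\qquad
\partial_t \log\frac{\omega_{\varphi_\epsilon}^n}{\omega^n} = g_{\varphi_\epsilon}^{\bar j i} H_{i\bar j},
\]
one obtains
\[
\partial_t H = A^{k\bar l} H_{k\bar l} + \beta\,\frac{\omega^n}{\omega_{\varphi_\epsilon}^n}\, g_{\varphi_\epsilon}^{\bar j i} H_{i\bar j},
\]
with $A^{k\bar l} := g_{\varphi_\epsilon}^{\bar j k}g_{\varphi_\epsilon}^{\bar l i}(\theta_\epsilon)_{i\bar j}$ positive definite because $\theta_\epsilon > 0$ for $\epsilon>0$. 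As the right-hand side is a second-order linear elliptic operator in $H$ with no zeroth-order term, the parabolic maximum principle yields
\[
\|H(t,\cdot)\|_{L^\infty(X)} \leq \|H(0,\cdot)\|_{L^\infty(X)} \quad \text{for all } t\geq 0.
\]
At $t=0$, $H(0) = c_\epsilon - \tr_{\omega_{\varphi_0}}\theta_\epsilon - \beta\,\omega^n/\omega_{\varphi_0}^n$, which is uniformly bounded in $\epsilon \in (0,\epsilon_0]$ since $\varphi_0$ is a fixed smooth strictly $\omega$-psh potential while $\theta_\epsilon = \theta+\epsilon\omega$ and $c_\epsilon$ are uniformly dominated on this range. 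This yields the first bound.

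For the inequality $\omega_{\varphi_\epsilon}\geq \theta_\epsilon/C$, I would rearrange the flow equation as
\[
\tr_{\omega_{\varphi_\epsilon}}\theta_\epsilon = c_\epsilon - \partial_t\varphi_\epsilon - \beta\,\frac{\omega^n}{\omega_{\varphi_\epsilon}^n}.
\]
Since $\beta\geq 0$ and the last term is non-negative, the first bound gives $\tr_{\omega_{\varphi_\epsilon}}\theta_\epsilon \leq C'$ uniformly. Working pointwise, simultaneously diagonalize $\omega_{\varphi_\epsilon}$ and $\theta_\epsilon$ (possible since $\omega_{\varphi_\epsilon}>0$) with respective eigenvalues $\lambda_k>0$ and $\mu_k \geq 0$; then $\sum_k \mu_k/\lambda_k \leq C'$, and each summand being non-negative forces $\lambda_k \geq \mu_k/C'$ for every $k$, which is precisely the matrix inequality $\omega_{\varphi_\epsilon}\geq \theta_\epsilon/C'$.

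The argument is essentially a routine maximum-principle computation; the only point requiring care is the uniformity in $\epsilon$, which is crucial for the compactness argument used later to pass from the approximating smooth flows to the degenerate flow. This uniformity is automatic here because the only $\epsilon$-dependence enters through $c_\epsilon$ and $\theta_\epsilon$, both of which are uniformly controlled on $\epsilon \in (0,\epsilon_0]$, while $\varphi_0$ does not depend on $\epsilon$.
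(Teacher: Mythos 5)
Your proposal is correct and takes essentially the same approach as the paper: differentiate the approximating twisted J-flow in $t$, observe that $\partial_t\varphi_\epsilon$ satisfies a linear parabolic equation $(\partial_t-\mathcal{L})\partial_t\varphi_\epsilon=0$ with $\mathcal{L}$ elliptic and no zeroth-order term, apply the maximum principle to bound $\partial_t\varphi_\epsilon$ by its initial data, and then deduce $\omega_{\varphi_\epsilon}\geq\theta_\epsilon/C$ from the resulting bound on $\tr_{\omega_{\varphi_\epsilon}}\theta_\epsilon$ by pointwise simultaneous diagonalization. The only difference is that you spell out the final trace-to-matrix-inequality step, which the paper leaves implicit.
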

\begin{proof}
 Differentiating \eqref{eq:J-flow_e}, we get
\begin{equation}
    \frac{\p }{\p t}\left(  \frac{\p \f_\epsilon}{\p t}\right)= \mathcal{L} \left(  \frac{\p \f_\epsilon}{\p t}\right),
\end{equation}
where $$\mathcal{L}= \left( \om_\f^{k\bar j} \om_\f^{i\bar \ell} (\theta_\e)_{k\bar \ell}+\beta\frac{\om^n}{\om_\f^n}\om_\f^{i\bar j}\right) \p_i\p_{\bar j}.$$
Here we write $\omega= \sum_{k,\ell}i\om_{k\bar \ell}dz^k\wedge d\bar z^{\ell}$ in local coordinates and $(\om^{k\bar\ell})= (\om_{k\bar\ell})^{-1}$.
We infer from the maximum principle that
\begin{equation}
    \min_X \left(c_\e-\dfrac{\omega_{\f_0 }^{n-1}\wedge \theta_\epsilon }{\omega_{\f_0} ^n}-\beta\frac{\om^n}{\om_{\f_0}^n}\right) \leq \p _t \f_\epsilon \leq \max_X \left(c_\e -\dfrac{\omega_{\f_0 }^{n-1}\wedge \theta_\epsilon }{\omega_{\f_0} ^n}-\beta\frac{\om^n}{\om_{\f_0}^n}\right).
\end{equation}
This implies that $|\p_t \f_\epsilon|\leq C,$ hence $\tr_{\omega_{\f_\e}}\theta_\e\leq C$  for a uniform constant $C>0$, therefore  \begin{equation}
\omega_{\f_\e}\geq \frac{1}{C}\theta_\e. 
\end{equation}
\end{proof}

\subsection{$C^0$ estimate}
We provide here a $C^0$ estimate using the ABP maximum principle adapting arguments from \cite{PT}. Our situation is different from the general setting in \cite{PT} where $\theta$ is assumed to be {\sl positive}.  In the latter case, the parabolic equation has been reformulated as $$\dot \f =f(\lambda_\theta(\om_\f)),$$
where $\lambda_\theta( \om_\f) \in \mathbb{R}^n$ is the vector of eigenvalues of $\om_\f$   with respect to $\theta$. 

\medskip
Since we work with  a  smooth $(1,1)$-form $\theta$ which is only semipositive,  this setting is no longer valid.  We consider instead $\lambda=\lambda _\om ( \om_{\f_\e})$ and $\mu=(\mu_1,\ldots,\mu_n)$ the diagonal entries of $\theta_\e$ with respect to $\omega$,  and reformulate the twisted J-flow as in Section \ref{section:twisted_j_flow}:
\[ \dot \f_\e =c_\e-\sum_{j=1}^n\frac{\mu_j}{\lambda_j}-\frac{\beta}{\lambda_1\ldots\lambda_n}.\]

We first  remark that 
\begin{lemma}\label{lem:vanishing_int}
 Along the twisted J-flow we have 
 \begin{equation}\label{eq:int_vanish}
     \sum_{j=0}^n\int_X\f_\epsilon \omega^j_{\f_\epsilon} \wedge \omega^{k-j}=0,
 \end{equation}
hence 
\begin{equation}\label{eq:sup_inf}
    \sup_X \f_\epsilon \geq 0 \text{ and } \inf_X \f_\epsilon\leq 0.
\end{equation}
\end{lemma}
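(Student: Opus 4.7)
The plan is to view $F(\psi):=\sum_{j=0}^n\int_X \psi\,\om_\psi^j\wedge\om^{n-j}$ as a Monge-Amp\`ere-type energy, show that it is constant along the twisted J-flow, and arrange the normalization of $\f_0$ so that it vanishes. The sign conclusions $\sup_X \f_\e\geq 0$ and $\inf_X \f_\e\leq 0$ then follow at once.

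First I would differentiate each $I_j(\f_\e(t)):=\int_X \f_\e\,\om_{\f_\e}^j\wedge\om^{n-j}$ in time. The factor $\f_\e$ contributes $\int_X \dot\f_\e\,\om_{\f_\e}^j\wedge\om^{n-j}$, while differentiating the powers of $\om_{\f_\e}$ produces $j\int_X \f_\e\,\dd\dot\f_\e\wedge\om_{\f_\e}^{j-1}\wedge\om^{n-j}$. Integration by parts (Stokes on the closed smooth form $\om_{\f_\e}^{j-1}\wedge\om^{n-j}$) combined with $\dd\f_\e=\om_{\f_\e}-\om$ rewrites this term as $j\int_X \dot\f_\e\,\om_{\f_\e}^j\wedge\om^{n-j}-j\int_X \dot\f_\e\,\om_{\f_\e}^{j-1}\wedge\om^{n-j+1}$. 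Summing over $j=0,\ldots,n$ and reindexing, the intermediate contributions telescope and only
$$\frac{d}{dt}\sum_{j=0}^n I_j(\f_\e(t))=(n+1)\int_X \dot\f_\e\,\om_{\f_\e}^n$$
survives. Substituting the twisted J-flow equation for $\dot\f_\e$ and using cohomological invariance $[\om_{\f_\e}]=[\om]$ gives
$$\int_X \dot\f_\e\,\om_{\f_\e}^n=(c_\e-\beta)[\om]^n-n[\theta_\e]\cdot[\om]^{n-1},$$
which vanishes by the very definition $c_\e=n[\theta_\e]\cdot[\om]^{n-1}/[\om]^n+\beta$. Hence $\sum_{j=0}^n I_j(\f_\e(t))$ is constant in $t$. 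Since shifting $\f_0$ by a constant does not affect $\om_{\f_\e}$ or the right-hand side of the flow equation, I may normalize $\f_0$ so that $\sum_{j=0}^n I_j(\f_0)=0$, and then this identity persists for all $t>0$.

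For the sign statement, each measure $\om_{\f_\e}^j\wedge\om^{n-j}$ is non-negative with strictly positive total mass $[\om]^n$. If $\f_\e(t)$ were everywhere strictly positive the entire sum would be strictly positive, contradicting the identity; the argument for the infimum is symmetric. The only delicate aspect of the proof is the bookkeeping in the telescoping integration-by-parts step, but since $\f_\e$ is smooth in space-time along the approximating flow by Theorem \ref{thm:SW-Z}, no regularity obstacle arises.
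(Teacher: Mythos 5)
Your argument is correct and follows essentially the same route as the paper: the quantity $F(\psi)=\sum_{j=0}^n\int_X \psi\,\om_\psi^j\wedge\om^{n-j}$ you work with is exactly $(n+1)!\,I_\om(\psi)$, the Monge--Amp\`ere (Aubin--Mabuchi) energy, and the paper likewise observes that $I_\om(\f_\e(t))$ is stationary along the twisted J-flow precisely because $c_\e$ is normalized to make $\int_X\dot\f_\e\,\om_{\f_\e}^n$ vanish cohomologically. The only presentational difference is that the paper quotes the known closed-form expression for $I_\om$ (citing Weinkove) instead of re-deriving it via your telescoping integration by parts, and it leaves implicit the normalization of $\f_0$ (so that the conserved quantity equals $0$ rather than merely a constant) that you state explicitly.
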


\begin{proof}
Recall that the functional $I_\omega$  is defined by 
$$I_\om(\phi)=\int_0^1\int_X \frac{\partial \phi_t}{\partial t}\frac{\om_{\phi_{t}}^n}{n!}dt ,$$
for $\{\phi_t\}_{t\in [0,1]}$ a path in between $0$ and $\phi$. Observe that  $I_\om(\f_t)=0$ along the twisted J-flow. On the other hand, we also have (see for example \cite[Lemma 3.2]{Wein06})
\begin{equation}
    I_{\om}(\f) =  \frac{1}{(n+1)!} \sum_{k=0}^n\int_X \f\om^k\wedge\om_\f^{n-k},
\end{equation}
so we obtain \eqref{eq:int_vanish}.
\end{proof}
The following lemma uses the positivity condition \eqref{eq:pos_cond_e}. 
\begin{lemma}\label{lem:key_C0}
Let  $\delta_0<1, C_0$ and $c$ be positive constants. Assume that $ \mu=(\mu_1,\ldots,\mu_n)\in \mathbb{R}^n$ such that $ 0\leq \mu_i \leq C_0, \forall i=1,\ldots n$ and   $c -\sum_{j\neq k} \mu_j \geq \delta_0, \forall k=1,\ldots,n$. 

Assume that $\lambda = (\lambda_1,\ldots,  \lambda_n)\in \Gamma_n$ and $\tau\in \mathbb{R}$ satisfy $\lambda_k\geq 1-\delta,   \forall k>0$, $\tau\geq -\delta $ with $\delta=\delta_0/(4c+4)$  and  
 $$\sum _{k=1}^n\dfrac{\mu_k}{\lambda_k}+f(\lambda_1,\ldots,\lambda_n) =(c+\tau),$$ for some  continuous  function $f$ with $\lim_{\lambda_j\rightarrow +\infty} f(\lambda)=0, \forall i=1,\ldots ,n$. 
 
 Then there exists a uniform constant $K>0$ depending only on $\delta_0, n,c$, such that $$|\tau|\leq K\quad \text{and} \quad  \lambda_k\leq K, \forall k=1,\ldots,n.$$  
\end{lemma}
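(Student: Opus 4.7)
The plan is to exploit the positivity condition $c - \sum_{j \ne k} \mu_j \ge \delta_0$ together with the lower bound $\lambda_j \ge 1 - \delta$ to produce, for every index $k$, a strictly positive lower bound on the quantity $\mu_k/\lambda_k + f(\lambda)$. The smallness of $\mu_k$ and the vanishing of $f$ at infinity will then force each $\lambda_k$ to be uniformly bounded above, and the bound on $\tau$ will follow.

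First, I would fix $k \in \{1,\ldots,n\}$ and isolate the $k$-th term of the identity $\sum_j \mu_j/\lambda_j + f(\lambda) = c + \tau$:
\[
\frac{\mu_k}{\lambda_k} + f(\lambda) \;=\; (c + \tau) \,-\, \sum_{j \ne k} \frac{\mu_j}{\lambda_j}.
\]
Using $\lambda_j \ge 1 - \delta$, $\mu_j \ge 0$, and $\sum_{j \ne k} \mu_j \le c - \delta_0$, I would estimate $\sum_{j \ne k} \mu_j/\lambda_j \le (c-\delta_0)/(1-\delta)$. A direct computation gives $c - (c-\delta_0)/(1-\delta) = (\delta_0 - c\delta)/(1-\delta)$, and the choice $\delta = \delta_0/(4c+4)$ ensures $c\delta \le \delta_0/4$, so the right-hand side is at least $3\delta_0/4$. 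Combined with $\tau \ge -\delta \ge -\delta_0/4$ this yields, uniformly in $k$, the lower bound
\[
\frac{\mu_k}{\lambda_k} + f(\lambda) \;\ge\; \frac{\delta_0}{2}.
\]

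Next I would extract the $\lambda_k$ bound from this inequality. Since $\mu_k \le C_0$, if $\lambda_k \ge M$ then $\mu_k/\lambda_k \le C_0/M$ and hence $f(\lambda) \ge \delta_0/2 - C_0/M$; taking $M$ sufficiently large makes this strictly positive, contradicting the hypothesis $\lim_{\lambda_k \to +\infty} f(\lambda) = 0$ and forcing $\lambda_k \le K_1$ for a uniform $K_1$. Once $\lambda \in [1-\delta, K_1]^n$ is confined to a compact box, continuity bounds $|f(\lambda)| \le C$ on this region, and the original identity together with $\sum_j \mu_j/\lambda_j \le nC_0/(1-\delta)$ yields $|\tau| \le K_2$.

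The main technical subtlety is that the contradiction in the previous paragraph requires the convergence $f(\lambda) \to 0$ as $\lambda_k \to +\infty$ to be quantitative and uniform in the remaining components. For the J-flow application of the lemma this is automatic: with $f(\lambda) = \beta/(\lambda_1 \cdots \lambda_n)$ one has the crude bound $f(\lambda) \le \beta (1-\delta)^{-(n-1)}/\lambda_k$, which makes the argument explicit and produces a constant $K$ depending only on $\delta_0, n, c, C_0, \beta$. In a fully general setting the same conclusion follows by a compactness-contradiction argument, extracting a subsequence along which $\mu \in [0,C_0]^n$ converges before letting $\lambda_k \to \infty$.
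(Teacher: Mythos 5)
Your proof is correct and follows essentially the same route as the paper's: both arguments hinge on bounding $\sum_{j\neq k}\mu_j/\lambda_j \leq (c-\delta_0)/(1-\delta)$ via $\mu_j\geq 0$, $\sum_{j\neq k}\mu_j\leq c-\delta_0$, and $\lambda_j\geq 1-\delta$, and both use the specific choice $\delta = \delta_0/(4c+4)$ to squeeze out the numerical threshold $\delta_0/2$. The paper phrases this as a contradiction to $\delta_0 \leq \delta_0/2$ for a sequence with $\lambda_k^{(\ell)}\to\infty$, whereas you derive the uniform positive lower bound $\mu_k/\lambda_k + f(\lambda)\geq \delta_0/2$ directly and then read off the bound on $\lambda_k$; the two are computationally interchangeable. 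One small remark: you can drop the dependence on $C_0$, since the hypothesis $c-\sum_{j\neq k}\mu_j\geq\delta_0$ already forces $\mu_k\leq c-\delta_0$ for every $k$ (apply the inequality with some $k'\neq k$), which is why the paper's constant depends only on $\delta_0,n,c$. Your observation that the stated hypothesis on $f$ does not, as written, yield a $K$ uniform in $\lambda$ unless the decay of $f$ is uniform in the other components (true for the relevant $f=\beta/(\lambda_1\cdots\lambda_n)$) is a legitimate point; the paper's proof glosses over this by writing $\inf_S f$ and $\sup_S f$ as if they were automatically finite.
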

\begin{proof}
Observe that $$-c+ \inf_{S} f \leq \tau\leq \frac{n(c-\delta_0)}{(1-\delta)} +\sup_{S}f,$$
where $$ S= \{\lambda\in \Gamma_n: \lambda_j\geq 1-\delta, \forall j=1,\ldots,n\}.$$

Assume by contradiction that there is a sequence of $\lambda^{(\ell)} \in \Gamma_n$ and $k\in \{1,\ldots, n\}$ such that $ \lim_{\ell \rightarrow +\infty}\lambda^{(\ell)}_k=+\infty$, then there is  $\ell_0>0$ such that 
$$(c+\tau)- \frac{\delta_0}{4} \leq \sum_{j\neq k} \dfrac{\mu_j}{\lambda^{(\ell_0)}_j} \leq \dfrac{1}{1-\delta}\sum_{j\neq k} \mu_j \leq\frac{c-\delta_0}{1-\delta} .$$
Therefore 
$(1-\delta)[(c+\tau)-\delta_0/4] \leq c-\delta_0$, so 
$$\delta_0\leq  c \delta + (1-\delta)(\frac{\delta_0}{4} -\tau) \leq  c \delta + (1-\delta)(\frac{\delta_0}{4}  +\delta) \leq  \frac{\delta_0}{4}  +  \delta (c+1) =\frac{\delta_0}{2} ,$$ so we get a  contradiction. 
\end{proof}
We now deduce a  uniform lower bound for $\f_\e$ following an argument in \cite{PT}.

\begin{lemma}\label{lem:lower_bd}
There exists a uniform constant $C$ such that $\f_\epsilon\geq - C$. 
\end{lemma}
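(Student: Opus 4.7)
The plan is to argue by contradiction following the parabolic ABP strategy of \cite{PT}, adapted to the present setting where $\theta$ is merely semipositive. Suppose instead that along some sequence $\e_k \to 0$ one has $\inf_{X \times [0,\infty)} \f_{\e_k} \to -\infty$. By \eqref{eq:sup_inf} one may subtract the supremum and work with $\psi_\e := \f_\e - \sup_X \f_\e \leq 0$, whose negative infimum still tends to $+\infty$ along the subsequence.

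First I would localize. At a time $t_\e$ and a point $x_\e$ where $\psi_\e$ attains a minimum equal to $-M_\e$, pick holomorphic normal coordinates for $\om$ centered at $x_\e$ on a ball $B_r$ of small fixed radius, and work on the parabolic cylinder $Q_r = B_r \times (t_\e - r^2, t_\e]$. Apply the parabolic ABP maximum principle, as in \cite{Sze,PT}, to the real function $\psi_\e$ on $Q_r$: this bounds a positive power of $M_\e$ from above by the integral over the \emph{contact set} $E \subset Q_r$ of the parabolic convex envelope, of an expression of the form $|\det D^2_x \psi_\e|\,(c_\e - \p_t \psi_\e)$.

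Next I would use Lemma \ref{lem:key_C0} to control the integrand on $E$. On the contact set, the real Hessian $D^2_x \psi_\e$ is nonnegative; passing to the complex Hessian and using that the chart is almost Euclidean, the eigenvalues $\lambda_j$ of $\om_{\f_\e}$ with respect to $\om$ satisfy $\lambda_j \geq 1 - \delta$ for any prescribed small $\delta>0$, provided $r$ is taken small enough. Meanwhile, by Lemma \ref{lem:c1_t} the derivative $\p_t \f_\e$ is uniformly bounded, so the parameter $\tau$ in Lemma \ref{lem:key_C0} satisfies $|\tau|\leq C$, and the positivity hypothesis on $\mu$ holds by the reduction \eqref{eq:cond_2_red_e}/\eqref{eq:pos_condition_2}. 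Applying Lemma \ref{lem:key_C0} then yields a uniform upper bound $\lambda_j \leq K$ at every contact point, and hence $\det D^2_x \psi_\e \leq K'$ on $E$. The right-hand side of the ABP inequality is therefore bounded independently of $\e$, contradicting $M_\e \to \infty$.

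The main obstacle will be the transition between the real Hessian of $\psi_\e$ that enters the ABP inequality and the complex Hessian eigenvalues $\lambda_j$ controlled by the twisted J-flow equation: one must verify that at points in the parabolic contact set the lower bound $\lambda_j \geq 1 - \delta$ really holds with $\delta$ as small as demanded by Lemma \ref{lem:key_C0}, which forces the localization radius $r$ to be chosen small and requires a careful comparison between the Euclidean convex envelope and the $\om$-reference geometry. A secondary technical point is to make sure the parabolic cylinder $Q_r$ stays inside the domain of definition of the flow, which can be arranged by handling a short initial interval separately using the smoothness of the initial datum $\f_0$.
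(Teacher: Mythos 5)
You identify the right ingredients — parabolic ABP, Lemma~\ref{lem:key_C0} to control the Hessian on the contact set, and Lemma~\ref{lem:c1_t} for $\p_t\f_\e$ — but the way you invoke ABP contains a genuine gap, and the step that actually makes $\inf\f_\e$ appear in the estimate is missing. You claim that applying ABP to $\psi_\e$ on a parabolic cylinder ``bounds a positive power of $M_\e$ from above by the integral over the contact set.'' This is not what ABP gives. Tso's parabolic ABP bounds a power of the \emph{gap} between the interior minimum and the infimum of the function on the parabolic boundary of the cylinder, not the interior minimum itself. A priori nothing prevents $\psi_\e$ from sitting near $-M_\e$ throughout the cylinder, in which case that gap is tiny and the inequality is vacuous. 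Bounding the integrand on the contact set via Lemma~\ref{lem:key_C0} then only balances a bounded right-hand side against a possibly negligible left-hand side: no contradiction follows, and no bound on $M_\e$ is produced.

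The paper closes this in two steps that your proposal omits. First, it replaces $\f_\e$ by the perturbed function $w_\e = \f_\e + \tfrac{\delta^2}{4}|z|^2 + |t-t_\e|^2$ centered at the minimizer. This artificially forces a fixed gap (of order $\delta^2$) between the interior minimum and the parabolic-boundary values, so the left-hand side of the ABP inequality becomes the concrete quantity $C_n\delta^{4n+2}$ regardless of $M_\e$; it also delivers the lower bound $\lambda_j\geq 1-\delta$ on the contact set needed for Lemma~\ref{lem:key_C0}, with $\delta$ dictated by the coefficient of the perturbation rather than by shrinking the radius $r$. Second — and this is where $|L_\e|$ actually enters — on the contact set $S$ one has $w_\e\leq L_\e+\delta^2/4$, so once $|L_\e|$ is large, $|w_\e|\gtrsim|L_\e|$ on $S$ and hence $|S|\lesssim\frac{1}{|L_\e|}\int_{\mathcal{U}}|w_\e|$. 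The last integral is bounded uniformly because $\f_\e$ is $\om$-psh with $\sup_X\f_\e\geq 0$ (Lemma~\ref{lem:vanishing_int}), so $\|\f_\e-\sup_X\f_\e\|_{L^1(X,\om^n)}\leq C$ by \cite[Prop.\ 8.5]{GZ}. Assembling gives $\delta^{4n+2}\lesssim 1/|L_\e|$, a direct quantitative bound. Without the quadratic perturbation and without the $L^1$ compactness input for $\om$-psh functions your ABP strategy does not close; by contrast, the real-versus-complex-Hessian passage you flag as the main difficulty is routine once the perturbation is in place. (A further, minor issue: normalizing by $\sup_X\f_\e$ alters $\p_t$ by $\frac{d}{dt}\sup_X\f_\e$; the paper sidesteps this by working with $\f_\e$ directly and only using $\sup_X\f_\e\geq 0$.)
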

\begin{proof}
Since $\p_t \f_\epsilon$ is uniformly bounded for all time by the constant depending only on $\f_0$, we can consider $t\geq \delta$.  For any $T>\delta$, for each $t$ we note
$$L_\epsilon= \min_{X\times [0,T]} \f_\epsilon =\f_{\epsilon}(x_\epsilon,t_\epsilon).$$ 

Let $(z_1,\ldots,z_n)$ be a local coordinates centered at $x_\epsilon$, and $U=\{z: |z|<1 \}$  such that $ A^{-1}\om_{E}\leq \omega \leq A\omega_E$ where $\om_E $ is the Euclidean metric and $A$ is some uniform constant. Denote $\mathcal{U}_\epsilon= U\times\{t: -\delta\leq 2(t-t_\epsilon)<\delta\}$, and define
$$w_\epsilon= \f_\epsilon+\dfrac{\delta^2}{4}|z|^2 +|t-t_\epsilon|^2,$$
where $\delta>0$. 

Then $w_\epsilon$ attains its minimum on $\mathcal{U}_\epsilon$ at $(z_\epsilon,t_\epsilon)$ and $w_\e\geq \min _{\mathcal{U}_\e} w_\e +\frac{1}{\delta^2}$ on the boundary of $\mathcal{U}_\epsilon$. Now by the ABP inequality due to Tso \cite[Proposition 2.1]{Tso}, there exists a constant $C_n=C(n)>0$ so that 
$$C_n\delta^{4n+2}\leq \int_S(-\p_t w_\e)\det((w_\e)_{ij})dxdt,$$
where \[
S:=\left\lbrace (x,t) \in \mathcal{U}: \begin{array}{l}
  w_\e(x,t)\leq w_\e(z_0,t_0)+\frac{\delta^2}{4},\quad |D_x w_\e(x,t) |<\frac{\delta^2}{8}, \textit{ and }\\
  w_\e(y,s)\geq w_\e(x,t)+D_x w_\e(x,t). (y-x),\, \forall y\in U, s\leq t 
 \end{array}
\right\rbrace.
\]
Since on $S$, we have $D^2 w_\epsilon\geq 0$ and $\p _t w_\epsilon\leq 0$, then $\lambda_\omega(dd^c \f_\epsilon) \geq -\delta \mathbf{I}$, and  $0\leq -\p_t w \leq -\p_t u+\delta $, hence
\begin{equation}\label{eq:ineq_lem}
    \lambda_\omega(\omega+ dd^c u ) \geq (1-\delta)\mathbf{I} \text{ and } -\p_t \f_\epsilon \geq -\delta.
\end{equation}
 Since $\f_\e $ satisfies the equation \eqref{eq:J-flow_e}, we have
 \begin{equation}\label{eq:identity}
     \sum_{j=1}^n\dfrac{\mu_k  }{\lambda_k} +\frac{\beta}{\lambda_1\ldots \lambda_n}=(c_\e+\p_t \f_\epsilon),
 \end{equation}
where $\lambda=\lambda_\omega(\omega+dd^c \f_\e)$ and $\mu_1\ldots,\mu_n $  are diagonal entries of $\theta_\e$ with respect to normal coordinates of $\omega$.   It follows from Lemma \ref{lem:key_C0} that there exists a uniform constant $C$ such that $$|\dot w_\e|+\det(D^2(w_\e)_{jk})\leq C,$$
therefore
$$C_n\delta^{4n+2}\leq C'\int_Sdxdt $$
for a uniform constant $C'>0$.

Now on $S$ we have $w_\epsilon\leq L_\epsilon+ \frac{\delta^2}{4}$. Since we can assume that $|L_\epsilon|>\delta^2$ so that $L_\epsilon+ \delta^2/4<0$ and $|w_\e| \geq |L_\e|/2$, then we get

\begin{equation}\label{eq:ineq_lem_2}
    C_n\delta^{4n+2}\leq C' \int_Sdxdt \leq \frac{2    C'}{|L_\epsilon|}\int_S |w_\e|dxdt\leq  \frac{2C'    }{|L_\epsilon|}\int_\mathcal{U} |w_\epsilon|dxdt.
\end{equation}

Next we have 
$|w_\epsilon|=-w_\epsilon= -\f_\epsilon -\delta^2/4 |z|^2 -(t-t_0)^2\leq -\f_\epsilon\leq -\f_\epsilon +\sup_X\f_\epsilon,$ since $\sup_X\f_\epsilon\geq 0 $ by \eqref{eq:sup_inf}.

Since $\f_\epsilon$ is   $\omega$-psh, we have 
$\|\f_\epsilon-\sup_X\f_\e\|_{L^1(X,\omega^n)}\leq C $ for $C$ depending only on $(X,\omega)$ (cf. \cite[Proposition 8.5]{GZ}). Combining this with \eqref{eq:ineq_lem_2} yields
$$C_n\delta^{4n+2 }\leq \dfrac{2C'}{|L_\epsilon|} \int_{|t|<\frac{\delta}{2}}\|\f_\epsilon-\sup_X\f_\e\|_{L^1(X,\omega^n)}dt \leq C''\delta \dfrac{2}{|L_\epsilon|},$$
for a uniform constant $C''$.
This implies $\f_\epsilon \geq -C$ for a uniform constant $C$. 
\end{proof}

The uniform bound now follows from a Harnack type inequality for $\f_\e$.  
\begin{lemma}\label{lem:C0}
For any $T>0$
There exists a uniform constant $C$ such that 
$$\|\f_\epsilon\|_{C^0(X\times[0,T])}\leq C.$$
\end{lemma}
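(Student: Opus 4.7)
The plan is to upgrade the one-sided estimate of Lemma \ref{lem:lower_bd} into a two-sided $L^\infty$ bound on $\f_\epsilon$, by combining the integral vanishing identity of Lemma \ref{lem:vanishing_int} with the standard Hartogs-type (sub-mean-value) inequality for $\omega$-plurisubharmonic functions. Since Lemma \ref{lem:lower_bd} already supplies a uniform lower bound $\f_\epsilon \geq -C_0$, the entire task reduces to bounding $\sup_X \f_\epsilon(\cdot,t)$ from above, uniformly in $\epsilon \in (0,\epsilon_0]$ and $t \in [0,T]$.

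For the upper bound, I would fix $t$ and separate the $j=0$ term in the identity \eqref{eq:int_vanish}, writing
$$\int_X \f_\epsilon \,\omega^n \;=\; -\sum_{j=1}^n \int_X \f_\epsilon \,\omega_{\f_\epsilon}^{\,j} \wedge \omega^{n-j}.$$
On each of the $n$ terms on the right, the pointwise bound $\f_\epsilon \geq -C_0$ and the cohomological identity $\int_X \omega_{\f_\epsilon}^{\,j} \wedge \omega^{n-j} = \int_X \omega^n = V$ (valid because $[\omega_{\f_\epsilon}] = [\omega]$) give
$$\int_X \f_\epsilon \,\omega_{\f_\epsilon}^{\,j} \wedge \omega^{n-j} \;\geq\; -C_0 V,$$
hence $\int_X \f_\epsilon \,\omega^n \leq n C_0 V$.

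Next I would use that $\f_\epsilon(\cdot,t)$ is (strictly) $\omega$-plurisubharmonic along the flow, so the standard Hartogs inequality (e.g.\ \cite[Proposition 8.5]{GZ}) yields a constant $C_1 = C_1(X,\omega)$ with
$$\sup_X \f_\epsilon(\cdot, t) \;\leq\; \frac{1}{V}\int_X \f_\epsilon(\cdot, t) \,\omega^n + C_1.$$
Combining this with the preceding inequality gives $\sup_X \f_\epsilon \leq n C_0 + C_1$, which together with Lemma \ref{lem:lower_bd} completes the two-sided $L^\infty$ bound.

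The argument is essentially routine once Lemma \ref{lem:lower_bd} is in hand, and in fact the resulting constant does not depend on $T$. The only mildly delicate point to double-check is that every constant appearing is genuinely uniform in $\epsilon$ and $t$: the lower bound of Lemma \ref{lem:lower_bd} depends only on the $L^\infty$ bound of $\p_t \f_\epsilon$ from Lemma \ref{lem:c1_t}, on the ABP input via Lemma \ref{lem:key_C0} (whose constants are controlled by $\delta_0$, $n$, and $c_\beta$), and on the $L^1$-control of $\omega$-psh functions, each of which is independent of $\epsilon \in (0,\epsilon_0]$ and of $t$. The Hartogs constant $C_1$ depends only on the background pair $(X,\omega)$. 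Thus no genuine obstacle is expected, and the conclusion of Lemma \ref{lem:C0} follows.
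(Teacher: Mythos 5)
Your proof is correct, but it takes a genuinely different and in fact simpler route than the paper. The paper's first proof invokes the monotonicity of the twisted $\J$-functional $\J^{\theta_\epsilon}_{\omega,\beta}$ along the flow to extract the integral bound $\sum_{j=0}^{n-1}\int_X\f_\epsilon\,\omega_{\f_\epsilon}^j\wedge\omega^{n-1-j}\wedge\theta_\epsilon \leq C_0$, and then applies the sub-mean-value inequality with the (potentially degenerate, but still nonzero and absolutely continuous) probability measure $\omega^{n-1}\wedge\theta/\int_X\omega^{n-1}\wedge\theta$; the paper's second proof is entirely different again, reducing to the elliptic $C^0$ estimate of Proposition \ref{prop:C0_elliptic} via a maximum-principle comparison with the static solution $u_\epsilon$. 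You bypass both: you directly exploit the vanishing identity $\sum_{j=0}^n\int_X\f_\epsilon\,\omega_{\f_\epsilon}^j\wedge\omega^{n-j}=0$ of Lemma \ref{lem:vanishing_int}, isolate the $j=0$ term, and control the remaining $n$ terms term-by-term using the lower bound of Lemma \ref{lem:lower_bd} together with the cohomological volume identity $\int_X\omega_{\f_\epsilon}^j\wedge\omega^{n-j}=V$; the Hartogs inequality is then applied with the non-degenerate measure $\omega^n/V$. This avoids the $\J$-functional entirely, uses a cleaner reference measure, and is logically leaner since it depends only on Lemmas \ref{lem:vanishing_int} and \ref{lem:lower_bd}, both of which are already in hand at this point and are independent of the present lemma. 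Both your argument and the paper's share the observation that the constant is independent of $T$.
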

\begin{proof}
Along the twisted J-flow we have $\J^{\theta_\e}_{\om,\beta}$ is decreasing, hence combining with Lemma \ref{lem:vanishing_int} implies
$$\sum_{j=0}^{n-1}\int_X\f_\epsilon \omega^j_{\f_\epsilon} \wedge \omega^{n-1-j} \wedge \theta_\e\leq C_0,$$
where $C_0$ is a uniform constant.
Therefore
\begin{eqnarray*}
     \int_X \f_\e  \om^{n-1} \wedge \theta_\e
     &\leq&C_0 -\sum_{j=1}^{n-1} \int_X \f_\epsilon \omega^j_{\f_\epsilon} \wedge \omega^{n-1-j} \wedge \theta_\e \\
     & = & C_0-\sum_{j=1}^{n-1} \int_X (\f_\epsilon-\inf_X\f_\e) \omega^j_{\f_\epsilon} \wedge \omega^{n-1-j} \wedge \theta_\e -(n-1)\inf_X\f_\e \int_X  \omega^{n-1} \wedge \theta_\e\\
     &\leq & C_0-C\inf_X\f_\e,
\end{eqnarray*}
where $C=\max_{\e\in [0,\e_0]}(n-1) \int_X\om^{n-1} \wedge \theta_\epsilon$. 
So we have
$$\int_X\f_\epsilon \om^{n-1} \wedge \theta\leq  \int_X\f_\epsilon \om^{n-1} \wedge \theta_\e\leq C_0 -C\inf _X \f_\epsilon.$$
Since $\f_\e$ is $\omega$-psh, and $\mu:=\om^{n-1} \wedge \theta/ \int_X \om^{n-1} \wedge \theta$  is a probability measure, then by  compactness properties of $\omega$-psh function (cf. \cite[Prop. 8.5]{GZ}) there is a constant $C_\mu>0$ depending on $\mu$ such that $$\sup_X\f_\epsilon \leq \int_X \f_\epsilon \mu+C_\mu,$$
hence
$$\sup_X\f_\epsilon \leq C_\mu+\frac{C_0}{\int_X \om^{n-1}\wedge \theta} -\frac{C}{\int_X \om^{n-1}\wedge \theta}\inf \f_\e.$$
Now combining with Lemma \ref{lem:lower_bd}, we obtain the desired estimate.
\end{proof}

\subsection{Alternative proof of $C^0$ estimate}
We give here a second proof of the $C^0$ estimate to $\f_\e$ by proving a uniform  $C^0$ estimate for the twisted inverse $\sigma_{n-1}$  equation where the uniform bound does not depend on the norms with respect to $\theta$.

\begin{proposition}\label{prop:C0_elliptic}
Let $\psi, \rho\in C^\infty (X)$ be  smooth  functions on $X$ with $\psi>0$. 
Assume that there is $\hat\omega=\om+\dd \underline{u}\in [\omega]$ such that 
\begin{equation}\label{C0:eq_cond_pos}
    (\psi\hat\omega-(n-1)\theta)\wedge \hat\omega^{n-2}>0.
\end{equation}

Let $u$ be a smooth solution
of the equation \begin{equation}\label{eq:j_eq_1}
   \rho\om^n+ n\omega^{n-1}_u\wedge\theta=\psi \om _u^{n}, \quad \sup_X (u-\underline{u})=0,
    \end{equation}
where $\omega$ and $\theta$ are K\"ahler metrics. 
There exists a constant $C>0$ which only depends  on $X,\omega, \|\theta\|_{C^0(X,\om)}, \|\psi\|_{L^\infty}, \|\rho\|_{L^\infty}$ such that $$\|u\|_{L^\infty(X)}\leq C.$$ 
\end{proposition}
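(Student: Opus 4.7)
\medskip
\noindent\textbf{Proof proposal.}
The plan is to adapt the parabolic ABP argument used in Lemmas \ref{lem:lower_bd} and \ref{lem:C0} to the elliptic setting, replacing Tso's parabolic ABP inequality by the elliptic Alexandrov--Bakelmann--Pucci maximum principle. First I would absorb $\underline{u}$ into the reference metric: replacing $\om$ by $\hat\om=\om+\dd\underline{u}$, which costs only a uniform constant since $\|\underline{u}\|_{C^2(X,\om)}$ is controlled, reduces the problem to the case where the positivity hypothesis \eqref{C0:eq_cond_pos} reads
$$(\psi\om-(n-1)\theta)\wedge\om^{n-2}\geq\delta_0\,\om^{n-1}$$
for a uniform $\delta_0>0$, and the normalization $\sup_X(u-\underline{u})=0$ becomes $\sup_X u = 0$ up to an additive constant. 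It thus suffices to prove $\inf_X u\geq -C$.

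To this end, denote $L=\inf_X u=u(x_0)$ and work in a coordinate ball $U=\{|z|<1\}$ centered at $x_0$ with $A^{-1}\om_E\leq\om\leq A\om_E$. Introducing the auxiliary function $w(z)=u(z)+\tfrac{\delta^2}{4}|z|^2$ for $\delta>0$ small, one has $w(x_0)=\min_U w$ and $w\geq\min_U w+1/\delta^2$ on $\partial U$. The elliptic ABP inequality then yields
$$c_n\,\delta^{2n}\leq\int_{\mathcal{S}}\det(D^2 w)\,dx,$$
where $\mathcal{S}$ is the contact set on which $D^2 w\geq 0$, $|Dw|\leq\delta^2/8$, and $w\leq L+\delta^2/4$. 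On $\mathcal{S}$ one has $\lambda_\om(\om_u)\geq(1-\delta)\mathbf{I}$. Rewriting the equation \eqref{eq:j_eq_1} in normal coordinates for $\om$ with $\om_u$ diagonal of eigenvalues $\lambda_1,\dots,\lambda_n$ and $\theta$ with diagonal entries $\mu_1,\dots,\mu_n$ bounded by $\|\theta\|_{C^0(X,\om)}$, it becomes
$$\sum_{k=1}^{n}\frac{\mu_k}{\lambda_k}+\frac{\rho}{\lambda_1\cdots\lambda_n}=\psi.$$
The reduced positivity condition translates into $\psi-\sum_{j\neq k}\mu_j\geq\delta_0$ for every $k$, so Lemma \ref{lem:key_C0} (applied with $\tau=0$, $c=\psi$, and $f=\rho/(\lambda_1\cdots\lambda_n)$, noting that $\lim_{\lambda_j\to\infty}f=0$) supplies a uniform upper bound $\lambda_k\leq K$ on $\mathcal{S}$. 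Consequently $\det(D^2 w)\leq C$ there, and the ABP inequality sharpens to $|\mathcal{S}|\geq c_n'\delta^{2n}$.

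The argument now closes as in Lemma \ref{lem:lower_bd}. Assuming $|L|>\delta^2$, on $\mathcal{S}$ we have $|w|\geq|L|/2$ and $|w|\leq -u\leq -u+\sup_X u$; combining with the standard $L^1$-compactness estimate $\|u-\sup_X u\|_{L^1(X,\om^n)}\leq C$ for $\om$-psh functions (cf. \cite[Proposition 8.5]{GZ}) yields $|L|\leq C$. The upper bound $\sup_X u\leq C$ is already built into the normalization $\sup_X(u-\underline{u})=0$ via the reduction step, so no Harnack-type argument is required here. The main subtlety is the verification that Lemma \ref{lem:key_C0} is truly applicable in this regime: it requires only an upper bound on the $\mu_j$ (supplied by $\|\theta\|_{C^0(X,\om)}$) and no lower bound on individual $\mu_j$, which is precisely the structural feature making the final constant independent of $\|\om\|_{C^0(X,\theta)}$, in contrast with the approach of \cite{Sze}.
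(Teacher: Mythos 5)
Your proposal follows essentially the same route as the paper's proof: reduce to $\underline{u}=0$ by changing the reference metric, apply the elliptic ABP maximum principle to $w=u+(\text{small})|z|^2$ on a coordinate ball around the minimum, invoke Lemma~\ref{lem:key_C0} (with $c=\psi$, $\tau=0$, $f=\rho/(\lambda_1\cdots\lambda_n)$) to bound $\det(D^2w)$ on the contact set, and close with the $L^1$-compactness of $\omega$-psh functions; you also correctly identify that the key structural point is that Lemma~\ref{lem:key_C0} needs only upper bounds on the $\mu_j$. The only blemish is the book-keeping of the auxiliary quadratic: with your choice $w=u+\tfrac{\delta^2}{4}|z|^2$ the boundary increase is $\delta^2/4$ (not $1/\delta^2$), so ABP gives $c_n(\delta^2/4)^{2n}\leq\int_{\mathcal{S}}\det(D^2w)$ rather than $c_n\delta^{2n}$ — the paper's elliptic argument instead takes $w=u+\delta|z|^2$ so the exponent matches — but since $\delta$ is a fixed constant determined by $\delta_0$ and $\sup\psi$, this does not affect the conclusion.
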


\begin{proof}
The proof uses an ABP estimate, as in  \cite{Sze}. Our $C^0$ bound only depends  on the norms with respect to $\omega$ while it depends on the  norms with respect to  $\theta$  in \cite{Sze}.   We can assume with out loss of generality that $\underline{u}=0$ by changing $\omega$. Then $\max_X u=0$, it suffices to get a lower bound for $L=\min_M u=u(x_0)$.
 
 Let $(z_1,\ldots, z_n)$ be a local coordinates centered at $x_0$, $U=\{z: |z|\leq 1\}$, such that $ A^{-1}\om_{E}\leq \omega \leq A\omega_E$ where $\om_E $ is the Euclidean metric and $A$ is some uniform constant.  Let $w= u+\delta|z|^2$. Then $\inf_{U}w = L= w( 0)$ and $w(z)\geq L+\de $ for $z\in \partial U$. It follows from the ABP maximum principle \cite[Lemma 9.2]{GT} that there exists a constant $C=C(n)$ so that

\[C(n)\delta^{2n}\leq \int_ S \det(D^2w), \]
  where $S$ is defined by
  
  \begin{equation}
      S=\left\lbrace x\in U: |D w|<\frac{\de}{2} \, \text{ and }  \, 
      w(y)\geq w(x)+D w(x).(y-x), \forall y\in U.
      \right\rbrace
  \end{equation}
 Since on $S$ we have $D^2 w\geq 0$, we infer $(u_{j\bar k })\geq -\delta I_{n}$, hence
  $ \lambda_\om(\om+\dd u)\geq (1-\de){\bf I}$. Now $u$ satisfies the equation \eqref{eq:j_eq_1}, hence
  \begin{equation}
    \frac{\rho}{\lambda_1\ldots\lambda_n} +\sum_{k=1}^n \frac{\mu_k}{\lambda_k} =\psi.
  \end{equation}
  As in Lemma \ref{lem:lower_bd}, the condition \eqref{C0:eq_cond_pos} implies that 
  \[ \psi - \sum_{j\neq k} \frac{\mu_k}{\lambda_k} \geq \delta_0 .\]
  It follows from Lemma \ref{lem:key_C0}
that there exists a uniform constant $C$ such that  
$\det (D^2 w_{jk})\leq C,$
hence $C(n)\delta^{2n} \leq C\int_S dx.$

Now on $S$ we have $w\leq L+\delta$. Since we can assume  that $|L|> 2 \delta$ so that $L+\de<0$ and $w\leq L/2$.   This implies 

\begin{equation}\label{eq:control_vol}
C(n)\delta^{2n}\leq C_1\int_S dx\leq \frac{2C_1}{|L| }\int_S |w| dx.\end{equation}
Since $u$ is $\omega$-psh with $\sup_X u=0$, there exists a uniform constant $C_2=C(X,\omega)$ such that $\|u\|_{L^1(X)}\leq C_2,$ hence $\| w\|_{L^1(U)}\leq C_3$.  Combining with \eqref{eq:control_vol} implies 
\[C(n)\delta^{2n}\leq  \frac{2C_1}{|L|}\|w\|_{L^1(U)}\leq \frac{C_4}{|L|} .\]
Therefore $u\geq -C$ for some uniform constant $C$.  
\end{proof}
We  now ready to give another proof of Lemma \ref{lem:C0}.  
\begin{proof}
We use an argument similar to one given in \cite{SW12}. 
Suppose that $u_\e$
 is the solution to the equation
 \begin{equation}
     \beta\om^n+ n\om_{u_\e}^{n-1}\wedge \theta_\e = c_\e \om^{n}_{u_\e}, \quad 
 \end{equation}
 where the existence of unique solution proved in \cite{SW08}
 Now set $\psi_\e= \f_\e-u_\e$ and compute 
 \begin{eqnarray}
     \dfrac{d\psi_\e}{d t}=  \dfrac{d\f_\e}{d t}&=&\dfrac{n\omega_{u_\e }^{n-1}\wedge \theta_\epsilon }{\omega_{u_\e} ^n}+\beta\frac{\om^n}{\om^n_{u_\e}} -\dfrac{n\omega_{\f_\e }^{n-1}\wedge \theta_\epsilon }{\omega_\f ^n}- \beta\frac{\om^n}{\om^n_{\f_\e}}\\
     &=& \int_0^1\dfrac{d }{ds } \left(\dfrac{n\omega_{s}^{n-1}\wedge \theta_\epsilon }{\omega_s} +\beta\frac{\om^n}{\om_s^n}\right) ds,
 \end{eqnarray}
    where $\om_s:=s\om_{u_\e} +(1-s)\om_{\f_\e}$. We define
    $$ \eta_s^{k\bar \ell} = \omega_s^{k \bar j }\omega_s^{i \bar \ell }\theta_{i\bar j}+\beta\frac{\om^n }{\om_{s}^n}\om_{s}^{k \bar \ell },$$ which is positive definite. Then we have
    \begin{eqnarray}
         \dfrac{d \psi_\e}{ dt }
          &=& \int_0^1\dfrac{d }{d s}\left( \dfrac{n\omega_{s }^{n-1}\wedge \theta }{\omega_{s} ^n} + \beta\frac{\om^n}{\om_s ^n}\right) ds\\
         &=& \int_0^1\eta^{k\bar \ell }(\omega_{\f_\e}-\omega_{u_\e})ds\\
         &=& 
        \left( \int_0^1\eta_s^{k\bar \ell } ds\right) \p_k \p_{\bar \ell} \psi_\e.
    \end{eqnarray}
    Since $  \left( \int_0^1\eta_s^{k\bar \ell } ds\right) $ is positive definite tensor, the maximum principle implies that $\psi_\e$ is uniformly bounded by $\sup_X|\psi_\e|$
at $t=0$.     Combining with  Lemma \ref{prop:C0_elliptic}, we have $\psi_\e$ is uniformly bounded. Therefore $\f_\e$ is uniformly bounded independent of $\e$. 
 \end{proof}

\subsection{$C^2$ estimate}
We now assume that $\theta$ also satisfies the condition \eqref{eq:cond_1}:  there is  an effective divisor $D$ on $X$,  such that
\begin{equation}
\theta\geq C_0 | s|_h^{2\gamma} \omega, \text{ and } \theta-\varepsilon_0 R_h\geq C_0 \omega,
\end{equation}
for some constants $\gamma,\varepsilon_0>0$,
where $h$ is a hermitian metric on $\mathcal{O}_X(D)$ and $s$ is a holomorphic section of $\mathcal{O}_X(D)$. 
Then we have the following $C^2$ estimate for $\f_\e$. 

\begin{lemma}\label{lem:C2}
There exist uniform positive constants $C,\alpha$, independent of $\epsilon$, such that $$\tr_{\omega} \omega_{\f_\e}\leq \dfrac{C}{|s|_{h}^{2\alpha}}.$$
\end{lemma}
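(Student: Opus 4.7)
The argument is the parabolic maximum principle applied to a Tsuji-type auxiliary quantity. Define
\[
Q_\epsilon(x,t) := \log \tr_\omega \omega_{\f_\epsilon}(x,t) \;-\; A\,\f_\epsilon(x,t) \;+\; \alpha \log |s|_h^2(x),
\]
for uniform constants $A, \alpha>0$ (independent of $\epsilon$) to be chosen. Since the approximating flow \eqref{eq:J-flow_e} is non-degenerate, $\f_\epsilon$ is smooth on $X$; combined with $\log|s|_h^2 \to -\infty$ along $D$, this ensures that for every $T>0$ the supremum of $Q_\epsilon$ on $X \times [0,T]$ is attained at a point $(x_0, t_0)$ with $x_0 \in X \setminus D$. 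We may assume $t_0 > 0$ (otherwise the bound follows from $\f_0 \in \mathcal{H}_\omega$); then $(\p_t - \mathcal{L}) Q_\epsilon \geq 0$ at $(x_0, t_0)$, where $\mathcal{L}$ is the linearization of the twisted J-flow from Lemma \ref{lem:c1_t}.

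Pick normal coordinates for $\omega$ at $(x_0,t_0)$ simultaneously diagonalizing $\omega_{\f_\epsilon}$ with eigenvalues $\lambda_1,\ldots,\lambda_n$. A standard Aubin-Yau-type computation, adapted to $\mathcal{L}$ in the spirit of \cite{Wein06,SW08} and accounting for the additional $\beta\,\om^n/\om_{\f_\epsilon}^n$ contribution to $\mathcal{L}$, yields a bound of the form $(\p_t - \mathcal{L})\log\tr_\omega\omega_{\f_\epsilon} \leq B\,\tr_\eta\omega + C$, with $B$ depending only on the bisectional curvature of $(X,\omega)$ and on $\|\theta\|_{C^0(X,\omega)}$. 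For the linear term, using $\mathcal{L}\f_\epsilon = \tr_\eta\omega_{\f_\epsilon} - \tr_\eta\omega$ together with $|\dot\f_\epsilon|\leq C$ from Lemma \ref{lem:c1_t} and $\tr_\eta\omega_{\f_\epsilon}\leq C$ (deduced from the flow equation and $\theta_\epsilon\geq 0$, which forces $\om^n/\om_{\f_\epsilon}^n$ to be bounded above when $\beta>0$, and is automatic when $\beta=0$), we obtain $-A(\p_t-\mathcal{L})\f_\epsilon \leq CA - A\,\tr_\eta\omega$. For the Tsuji correction, $\dd \log|s|_h^2 = -R_h$ on $X\setminus D$ gives $\alpha(\p_t-\mathcal{L})\log|s|_h^2 = \alpha\,\tr_\eta R_h$; the second inequality of \eqref{eq:cond_1} yields $R_h \leq \varepsilon_0^{-1}(\theta_\epsilon - C_0\omega)$, while the bound $\omega_{\f_\epsilon}\geq \theta_\epsilon/C$ from Lemma \ref{lem:c1_t} controls $\tr_\eta\theta_\epsilon$, so that $\alpha(\p_t-\mathcal{L})\log|s|_h^2 \leq C\alpha - (\alpha C_0/\varepsilon_0)\,\tr_\eta\omega$.

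Summing the three contributions and choosing $A$ so that $A + \alpha C_0/\varepsilon_0 > B+1$, we obtain $\tr_\eta\omega(x_0,t_0)\leq C$ at the maximum point. The main obstacle is the last step: turning this into a quantitative upper bound on $\tr_\omega\omega_{\f_\epsilon}$ of order $|s|_h^{-2\alpha'}$. Here we invoke the first inequality of \eqref{eq:cond_1}: the diagonal entries of $\theta_\epsilon$ satisfy $\mu_i\geq C_0|s|_h^{2\gamma}$, hence
\[
C \;\geq\; \tr_\eta\omega \;\geq\; C_0|s|_h^{2\gamma}\sum_{i=1}^n\lambda_i^{-2},
\]
forcing the lower eigenvalue bound $\lambda_i\geq c|s|_h^{\gamma}$ at $(x_0,t_0)$. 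The matching upper bound on the $\lambda_i$'s is then extracted by re-examining the Aubin-Yau inequality: the positive curvature term divided by $H=\tr_\omega\omega_{\f_\epsilon}$ at the max, combined with the lower eigenvalue bound just obtained and the flow equation (controlling $\tr_{\omega_{\f_\epsilon}}\theta_\epsilon$ and, when $\beta>0$, also $\omega^n/\omega_{\f_\epsilon}^n$), gives $\tr_\omega\omega_{\f_\epsilon}(x_0,t_0)\leq C|s|_h^{-2\alpha'}(x_0)$ for a suitable exponent $\alpha'=\alpha'(\gamma,n)$. Choosing $\alpha>\alpha'$ in $Q_\epsilon$ and using Lemma \ref{lem:C0} to bound $\f_\epsilon$, the maximum principle inequality $Q_\epsilon(x,t)\leq Q_\epsilon(x_0,t_0)\leq C''$ on $X\times[0,T]$ rearranges into the claimed estimate $\tr_\omega\omega_{\f_\epsilon}\leq C/|s|_h^{2\alpha}$. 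Keeping every constant independent of $\epsilon$, in particular in the Aubin-Yau computation, is the main technical point.
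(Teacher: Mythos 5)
The overall framework you chose — Tsuji's trick with a quantity $\log\tr_\omega\omega_{\f_\epsilon} - A\f_\epsilon + \alpha\log|s|_h^2$ (equivalent to the paper's $H=\log\tr_\omega\omega_\f - A\tilde\f$ with $\tilde\f=\f-\varepsilon\log|s|^2_h$), the parabolic maximum principle, and the linearization $\mathcal{L}$ with coefficient tensor $\eta^{k\bar\ell}=\om_\f^{k\bar j}\om_\f^{i\bar\ell}\theta_{i\bar j}+\beta\frac{\om^n}{\om_\f^n}\om_\f^{k\bar\ell}$ — does match the paper's strategy, and your treatment of $-A(\p_t-\mathcal{L})\f_\epsilon$ and of the Tsuji correction via $\theta-\varepsilon_0 R_h\geq C_0\om$ is in the right spirit. (Note, though, that the $C^2$ computation produces extra terms of the form $\frac{C}{|s|^{2\gamma}\tr_\om\om_\f}$ and $\frac{C}{|s|^{4\gamma}\tr_\om\om_\f}$ from $\tr_{\om_\f}\om$ and $(\tr_{\om_\f}\om)^2$ which are not of the form $B\tr_\eta\om+C$; the paper handles these by assuming WLOG $\tr_\om\om_\f\geq C|s|^{-4\gamma}$ at the max, a step your sketch skips.)

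The genuine gap is the final step, which you yourself flag as ``the main obstacle.'' Having $\tr_\eta\om(x_0,t_0)\leq C$ and the lower eigenvalue bound $\lambda_i\geq c|s|_h^\gamma$ does \emph{not} yield an upper bound on $\tr_\om\om_{\f_\epsilon}$; for instance when $\beta=0$ these facts are perfectly compatible with one eigenvalue $\lambda_1\to\infty$ while the others stay of size $|s|_h^\gamma$. Your assertion that ``re-examining the Aubin--Yau inequality'' together with the flow equation closes the gap is never substantiated, and no such argument exists without invoking the subsolution/positivity condition \eqref{eq:cond_2}--\eqref{eq:cond_2_e} — which your proposal never uses at any point. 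That condition is not optional decoration; it is what makes the estimate true. The paper does not deduce ``$\tr_\eta\om\leq C$'' and try to invert it. Instead it arrives at the specific inequality
\begin{equation*}
0\leq C_1 - A\Bigl\{c_\beta - 2\tr_{\om_\f}\theta + (1-\varepsilon')\tr_{\tilde\om}\om - \beta(n+1)\tfrac{\om^n}{\om_\f^n}\Bigr\},
\end{equation*}
and splits into two cases depending on whether $c_\beta-2\tr_{\om_\f}\theta+(1-\varepsilon')\tr_{\tilde\om}\om\leq\delta_1$ or not. In the first case, writing the expression in coordinates diagonalizing $\om_\f$ and completing a square, the positivity $c_\beta-\sum_{j\neq k}\mu_j\geq\delta_0$ forces $\mu_k/\lambda_k\geq\delta_0/10$ for every $k$, hence $\tr_\om\om_\f\leq C$ with \emph{no} $|s|$-degeneracy at all. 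In the second case, $\beta\frac{\om^n}{\om_\f^n}$ is bounded below, giving $\prod\lambda_k\leq C$, which combined with $\tr_{\om_\f}\om\leq C|s|_h^{-2\gamma}$ (from $\tr_{\om_\f}\theta\leq C$ and $\theta\geq C_0|s|_h^{2\gamma}\om$) yields $\tr_\om\om_\f\leq C|s|_h^{-2(n-1)\gamma}$. Without this dichotomy and the subsolution-driven square completion, your argument cannot be made to close.
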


\begin{proof} To simplify the notations, we drop all subscripts  $\e$, so we write $\varphi$ (resp.  $\theta$) instead of $\varphi_\epsilon$ (resp. $\theta_\epsilon$). The constant $C>0$  below will be independent of  $\epsilon$.

We use  Tsuji's trick \cite{Tsu}:  set  $\tilde\f = \f - \varepsilon \rho$, where $\rho =\log|s|^2_h$ for some small constant $\varepsilon$ that will be chosen below. 
Since $\f$ is uniformly bounded by Lemma \ref{lem:C0}, $\tilde\f $ is uniformly bounded from below and tends to $+\infty$ along $D$.   We set
$$H= \log \tr_{\omega} \omega_{\f} -  A\tilde\f,$$
where $A>0$ will be chosen hereafter. 
 It is straightforward that $H$ achieves a maximum at each time $t$ away from $D$.
We now prove that $H$ is bounded from above. We will use the maximum principle and follow the computation for the (twisted) J-flow  in \cite{SW08,SW12, Zhe15,Zhe18} to simplify  $(\partial_t-\mathcal{L})H$,
where $$\mathcal{L}= \left( \om_\f^{k\bar j} \om_\f^{i\bar \ell} \theta_{k\bar \ell}+\beta\frac{\om^n}{\om_\f^n}\om_\f^{i\bar j}\right) \p_i\p_{\bar j},$$
here we write $\omega= \sum_{k,\ell}i\om_{k\bar \ell}dz^k\wedge d\bar z^{\ell}$ in local coordinates and $(\om^{k\bar\ell})= (\om_{k\bar\ell})^{-1}$.

\medskip
 Set $\La=\tr_{\om}\om_\f$.  Using the flow equation we have
 \begin{eqnarray}
      \partial_t \La&=& \om^{i\bar j} \p_t\om_{\f, i\bar j} =\om^{i\bar j}\dot \f_{i\bar j}\\
      &=&\om^{i\bar j}\left[  c_\beta -\om_\f^{p\bar q}\theta_{p\bar q}-\beta \frac{\om^n}{\om^n_\f} \right]_{i\bar j}\\
      &=& \om^{i\bar j}[ \om_\f^{r\bar q}\om_\f^{p\bar s}(\om_{\f, r\bar s})_{i\bar j}-\om_\f^{r\bar q}\om_\f^{p\bar b}\om_\f^{a\bar s}(\om_{\f, a\bar b})_{\bar j}(\om_{\f, r\bar s})_{i} \\
      && -\om_\f^{r\bar b}\om_\f^{a\bar q}\om_\f^{p\bar s}(\om_{\f, a\bar b})_{\bar j}(\om_{\f, r\bar s})_{i}]\theta_{p\bar q} + 2Re( \omega^{i\bar j} \omega_\f^{p\bar \ell}\omega_\f^{k\bar q}(\omega_{\f, k\bar \ell})_{i} (\theta_{p\bar q })_{\bar j} )\\
      &&- \om^{i\bar j}\om_\f^{p\bar q}\theta_{p\bar q i\bar j}-\beta\om^{i\bar j} \left(\frac{\om^n }{\om_\f^n}\right)_{i\bar j}.
 \end{eqnarray}

\medskip
In the normal coordinates of $\om$ we have
\begin{eqnarray*}
     \left(\frac{\om^n }{\om_\f^n}\right)_{i\bar j}&=& -\om^{k\bar \ell}R_{k\bar \ell i\bar j}(\om) \frac{\om^n}{\om_\f^n}+\frac{\om^n}{\om_\f^n}\om_\f^{p\bar q} \om^{k \bar \ell }(\om_{\f, p\bar q})_{\bar j} (\om_{\f, k\bar \ell})_i\\
     &&+ \frac{\om^n}{\om_\f^n}\om_\f^{k\bar q} \om^{p \bar \ell }(\om_{\f, p\bar q})_{\bar j} (\om_{\f, k\bar \ell})_i-\frac{\om^n}{\om_\f^n} \om_\f^{k\bar \ell} (\om_{\f, k\bar \ell})_{i\bar j},
\end{eqnarray*}
 and
 $$\La_{k\bar \ell} = R^{p\bar q}{}_{k\bar \ell} (\om) \om_{\f, p\bar q} +\om^{p\bar q}(\om_{\f, p\bar q})_{k\bar \ell}.$$
 
Now, we have

\begin{eqnarray}\label{term_1}
      \left( \frac{\p }{\p t}-\mathcal{L} \right) H &=& \frac{1}{\La}\partial_t \La -\om_\f^{k\bar j}\om_\f^{i\bar \ell}\theta_{i\bar j} \left( \frac{\La_{k\bar \ell}}{\La}-\frac{\La_k\La_{\bar \ell}}{\La^2}\right)\\\label{term_2}
      &&-\beta  \frac{\om^n}{\om^n_\f}\om_\f^{k\bar \ell} \left( \frac{\La_{k\bar \ell}}{\La}-\frac{\La_k\La_{\bar \ell}}{\La^2}\right)\\\label{term_3}
      && -A(\partial_t-\mathcal{L})\tilde \f.
\end{eqnarray}
The terms in \eqref{term_1} and \eqref{term_2} give 
\begin{eqnarray}\label{row_4}
\frac{\p_t \Lambda - \om_\f^{k\bar j}\om_\f^{i\bar \ell} \theta_{i\bar j}\Lambda_{k\bar\ell}}{\Lambda} &+&\frac{\om_\f^{k\bar j}\om_\f^{i\bar \ell}\theta_{i\bar j} \Lambda_k\Lambda_{\bar \ell}}{\Lambda^2}\\\label{row_5}
-\beta\frac{\om^n}{\om_\f^n}\frac{\om_\f^{k\bar \ell}\Lambda_{k\bar \ell}}{\Lambda} &+&\beta\frac{\om^n}{\om_\f^n}\om_\f^{k\bar \ell}\frac{\Lambda_k\Lambda_{\bar \ell}}{\Lambda^2}.
\end{eqnarray}
Developing  \eqref{row_4} using previous calculations, we get 
\begin{eqnarray}
  \label{row_6}    &&\frac{1}{\Lambda}\left\{\om^{i\bar j} \om_\f^{r\bar q}\om_\f^{p\bar s} (\om_{\f, r\bar s})_{i\bar j}\theta_{p\bar q} -\om^{i\bar j} \om_\f^{r\bar q} \om_\f^{p\bar b}\om_\f^{a\bar s} (\om_{\f, a\bar b})_{\bar j} (\om_{\f, r\bar s})_{i} \theta_{p\bar q} \right.\\ &&\label{row_7}  
  -\om^{i\bar j} \om_\f^{r\bar b}\om_\f^{a\bar q} \om_\f^{p\bar s} (\om_{\f, a\bar b})_{\bar j} (\om_{\f, r\bar s})_{i} \theta_{p\bar q}  +
 2Re( \omega^{i\bar j} \omega_\f^{p\bar \ell}\omega_\f^{k\bar q}(\omega_{\f, k\bar \ell})_{i} (\theta_{p\bar q })_{\bar j} )    \\ \label{row_8} 
   &&  -\om^{i\bar j} \om_\f^{p\bar q}\theta_{p\bar qi\bar j}+\beta \om^{i\bar j} \om^{k\bar \ell}R_{k\bar \ell i\bar j}(\om) \frac{\om^n}{\om^n_\f} -\beta\frac{\om^n}{\om_\f^n}\om^{i\bar j}\om_\f^{k \bar \ell} \om_\f^{p \bar q}(\om_{\f, p\bar q})_{\bar j}(\om_{\f, k\bar \ell})_i\\  \label{row_9} 
   &&\left.  -\beta\frac{\om^n}{\om_\f^n}\om^{i\bar j}\om_\f^{k \bar q} \om_\f^{p \bar \ell}(\om_{\f, p\bar q})_{\bar j}(\om_{\f, k\bar \ell})_i +\beta \frac{\om^n}{\om_\f^n}\om^{i\bar j}\om_\f^{k \bar \ell} (\om_{\f, k\bar \ell})_{i\bar j}\right\} \\ \label{row_10}
   && -\frac{1}{\Lambda}\om_\f^{k\bar j}\om_\f^{i\bar \ell}\theta_{i\bar j}R^{p\bar q}{}_{k\bar \ell}(\om)\om_{\f, p\bar q}  -\frac{1}{\Lambda}\om_\f^{k\bar j}\om_\f^{i\bar \ell}\theta_{i\bar j}\om^{p\bar q}(\om_{\f, p\bar q})_{k\bar \ell}  +\frac{ \om_\f^{k\bar j}\om_\f^{i\bar \ell}\theta_{i\bar j}\Lambda _k\Lambda_\ell}{\Lambda^2}
\end{eqnarray}
Developing \eqref{row_5} we obtain
\begin{eqnarray}  \label{row_11} 
-\beta\frac{\om^n}{\om_\f^n} \frac{\om_\f^{k\bar \ell}R^{p\bar q}{}_{k\bar \ell}(\om)\om_{\f, p\bar q}}{\Lambda}-\beta\frac{\om^n}{\om_\f^n}\frac{\om_\f^{k\bar \ell}\om^{p\bar q} (\om_{\f, p\bar q})_{k\bar \ell}}{\Lambda}+
     \beta\frac{\om^n}{\om_\f^n}\om_\f^{k\bar \ell}\frac{\Lambda_k\Lambda_{\bar \ell}}{\Lambda^2}. 
\end{eqnarray}
Now we have two following inequalities due to Weinkove \cite{Wein04} and Zheng \cite[Lemma 7]{Zhe15}:

\begin{equation}
    [\om^{i\bar j}\om_{\f}^{r\bar b}\om_\f^{a \bar q } \om_\f^{p\bar s} (\om_{\f, a\bar b})_{\bar j}(\om_{\f, r\bar s})_i\theta_{p\bar q}]\Lambda\geq \om_\f^{k\bar j}\om_\f^{i\bar \ell}\theta_{i\bar j}\Lambda_{k}\Lambda_{\bar \ell}
\end{equation}

\begin{equation}
    [\om^{i\bar j}\om_{\f}^{k\bar q}\om_\f^{p\bar \ell}(\om_{\f, p\bar q})_{\bar j}(\om_{\f, k\bar \ell})_i]\Lambda\geq \om_\f^{k\bar \ell}\Lambda_{k}\Lambda_{\bar \ell}.
\end{equation}
Therefore the last term in \eqref{row_10} is dominated by  the first term in \eqref{row_7} and the last term in \eqref{row_11} is dominated with  the first term in \eqref{row_9}. In addition, the last term in \eqref{row_9}  is canceled by the second term in \eqref{row_11} and the second term in \eqref{row_10} is canceled by the first term in \eqref{row_6}. 


We now follow \cite{SW12} to deal with the term involving one derivative of $\omega_\f$ and $\theta$.  We have
$$G= \omega^{ i\bar j} \omega_\f^{p \bar \ell}  \tilde{\om}^{k\bar q} K_{i \bar \ell  k}\overline{K_{j \bar p q } } \geq 0,$$
where  $\tilde{\om}^{k\bar q}:= \omega_{\f}^{k\bar s} \omega_{\f}^{r\bar q} \theta_{r\bar s}$  and
$$K_{i\bar \ell k} = (\omega_{\f, k\bar \ell})_{i} -\theta ^{a \bar b} \omega_{\f, k\bar b } \theta_{a\bar \ell i }.$$
By a direct computation, we get
\begin{eqnarray*}
G= \omega^{i\bar j}\omega_{\f}^{p\bar \ell} \tilde{\om} ^{k\bar q} (\omega_{\f, k\bar \ell})_i (\omega_{\f, p\bar q})_{\bar j} -2Re(\omega^{i\bar j}\omega_{\f}^{p\bar \ell} \omega_{\f}^{k\bar b} (\omega_{\f, k\bar \ell })_{i } (\theta_{p\bar b})_{ \bar j}) + \omega^{i\bar j}\omega_{\f}^{p\bar \ell} \theta^{a\bar s}\theta_{a\bar \ell i}\theta_{p\bar s \bar j}\geq 0,
\end{eqnarray*} 
We infer that
\begin{eqnarray*}
- \omega^{i\bar j}\omega_{\f}^{p\bar \ell} \omega_{\f}^{k\bar s} \omega_{\f}^{r\bar q} \theta_{r\bar s} (\omega_{\f, k\bar \ell})_i (\omega_{\f, p\bar q})_{\bar j} +2Re(\omega^{i\bar j}\omega_{\f}^{p\bar \ell} \omega_{\f}^{k\bar b} (\omega_{\f, k\bar \ell })_{i } (\theta_{p\bar b})_{ \bar j}) &\leq&  \omega^{i\bar j}\omega_{\f}^{p\bar \ell} \theta^{a\bar s}\theta_{a\bar \ell i}\theta_{p\bar s \bar j},
\end{eqnarray*} 
hence  the second term in \eqref{row_7}  minus the second term in \eqref{row_6}  is dominated by   
$\omega^{i\bar j}\omega_{\f}^{p\bar \ell} \theta^{a\bar s}\theta_{a\bar \ell i}\theta_{p\bar s \bar j}.$ 


Finally, we infer that the  terms in \eqref{term_1} and \eqref{term_2} are dominated by
\begin{eqnarray*}
   &&  \frac{1}{\La} \left\{ -\om_\f^{p\bar q}\theta_{p\bar q i \bar j} \om^{i\bar j}  - \om_\f^{k\bar j}\om_\f^{i\bar \ell}\theta_{i\bar j}R^{p\bar q}{}_{k\bar \ell} (\om)\om_{\f, p\bar q} +\omega^{i\bar j}\omega_{\f}^{p\bar \ell} \theta^{a\bar s}\theta_{a\bar \ell i}\theta_{p\bar s \bar j} \right.   \\
   &&  \left.  \quad +\beta \om^{i\bar j}\om^{k\ell}R_{k\bar \ell i\bar j }(\om) \frac{\om^n}{\om_\f^n} -\beta\frac{\om^n  }{\om_\f^n}\om_\f^{k\bar \ell}R^{p\bar q}{}_{k\bar \ell}(\om)\om_{\f, p\bar q}       \right\}.
\end{eqnarray*}
Therefore at  a maximum point $(t_0,x_0) $ of $H$, 
\begin{eqnarray}\nonumber
    0\leq  \left( \frac{\p }{\p t}-\mathcal{L} \right) H &\leq&\frac{1}{\tr_\om\om_\f} \left\{- \om_\f^{p\bar q}\theta_{p\bar q i \bar j} \om^{i\bar j} +\beta \om^{i\bar j}\om^{k\ell}R_{k\bar \ell i\bar j }(\om) \frac{\om^n}{\om_\f^n}  + \omega^{i\bar j}\omega_{\f}^{p\bar \ell} \theta^{a\bar s}\theta_{a\bar \ell i}\theta_{p\bar s \bar j}  \right\} \\ \nonumber
    & &-\frac{1}{\tr_\om\om_\f} \left\{\om_\f^{k\bar j}\om_\f^{i\bar \ell}\theta_{i\bar j}R^{p\bar q}{}_{k\bar \ell} (\om)\om_{\f, p\bar q} +\beta\frac{\om^n  }{\om_\f^n}\om_\f^{k\bar \ell}R^{p\bar q}{}_{k\bar \ell}(\om)\om_{\f, p\bar q} \right\}\\ \label{inq:max_principle}
&&- A \left( \frac{\p }{\p t}-\mathcal{L}\right) \tilde \f.
\end{eqnarray}
Since $\om_\f\geq C\theta\geq|s|^{2\gamma}_h\om$ by Lemma \ref{lem:c1_t}, we have
\[\frac{1 }{\tr_\om\om_\f}\om_{\f}^{p\bar q}(-\theta_{p\bar q i\bar j})\om^{i\bar j}\leq \frac{C\tr_{\om_\f}\om }{\tr_\om\om_\f} \leq \frac{1}{|s|^{2\gamma}_h} \frac{C}{\tr_\om\om_\f}.\]
and 
\[\frac{1 }{\tr_\om\om_\f}\omega^{i\bar j}\omega_{\f}^{p\bar \ell} \theta^{a\bar s}\theta_{a\bar \ell i}\theta_{p\bar s \bar j} \leq  \frac{C\tr_{\omega_\f} \omega \tr_{\theta } \omega }{\tr_\om\om_\f} \leq \frac{C \tr_{\om_\f} \om \tr_{\om}\om  }{\tr_\om \om_\f} \leq \frac{1}{|s|^{4\gamma}_h} \frac{C}{\tr_\om\om_\f}. \]
 At  $(t_0,x_0)$, we can assume that  $$\frac{1}{|s|^{4\gamma}_h} \frac{C}{\tr_\om\om_\f}\leq 1.$$ Otherwise $\tr_{\om}\om_\f\leq \frac{C}{|s|^{4\gamma}_h}$ at $(t_0,x_0)$, then by choosing $A=4\gamma/\delta$ we get the desired estimate. We also have
$$\beta\frac{1 }{\tr_\om\om_\f} \om^{i\bar j}\om^{k\bar \ell}R_{k\bar \ell i\bar j }(\om) \frac{\om^n}{\om_\f^n}\leq    \beta  \frac{C}{\tr_\om \om_\f} \frac{\omega^n}{\om_\f^n}= \frac{C }{\tr_\om\om_\f}  (c_\beta-\dot \f-\tr_{\om_\f}\theta)\leq \frac{C }{\tr_\om\om_\f}, $$
since $\dot \f$ is uniformly bounded by Lemma \ref{lem:c1_t}. 
 At  $(t_0,x_0)$, we can also assume that 
 $$\frac{C }{\tr_\om\om_\f}\leq 1, $$
 otherwise we have desired estimate. 
 
The first term in the second line of \eqref{inq:max_principle} is controlled as
\begin{equation}
-\frac{\om_\f^{k\bar j}\om_\f^{i\bar \ell}\theta_{i\bar j}R^{p\bar q}{}_{k\bar \ell} (\om)\om_{\f, p\bar q} }{ \tr_{\omega}\omega_{\f}}\leq \frac{C \om_\f^{k\bar j}\om_\f^{i\bar \ell}\theta_{i\bar j} \omega_{k\bar \ell} \tr_{\omega}\omega_{\f}}{\tr_{\omega}\omega_{\f}}= C \om_\f^{k\bar j}\om_\f^{i\bar \ell}\theta_{i\bar j} \omega_{k\bar \ell}.
\end{equation}

 For the last term in the second line of \eqref{inq:max_principle},
 we have
 
 \begin{equation}\label{inq:3_term}
    - \frac{1 }{\tr_\om\om_\f}  \beta\frac{\om^n  }{\om_\f^n}\om_\f^{k\bar \ell}R^{p\bar q}{}_{k\bar \ell}(\om)\om_{\f, p\bar q} \leq C \beta\frac{\om^n}{\om_\f^n}\frac{\tr_{\om_\f}\om\tr_{\om}\om_\f}{\tr_\om\om_\f}= C \beta\frac{\om^n}{\om_\f^n}\tr_{\om_\f}\om.
 \end{equation}

\medskip
Thus at a maximum point $(t_0,x_0)$ of $H$, 
\begin{equation}\label{ineq:H_est}
    0\leq  \left( \frac{\p }{\p t}-\mathcal{L} \right) H\leq C \left( \beta\frac{\om^n}{\om_\f^n}\tr_{\om_\f}\om +  \om^{k\bar j}\om_\f^{i\bar \ell}\theta_{i\bar j} \omega_{k\bar \ell} \right)+3 - A \left( \frac{\p }{\p t}-\mathcal{L}\right) \tilde \f. 
\end{equation}
By the condition \eqref{eq:cond_1} there exists $\varepsilon''>0$ satisfies $\om-\varepsilon'' R_h\geq C_0\om$ for some $C_0>0$. Hence we can write $\varepsilon $ as $\varepsilon=\varepsilon'\varepsilon''$. Denote by $\tilde \om$ a positive $(1,1)$-form defined by $\tilde\om^{i\bar j}=\om_\f^{k\bar j} \om_\f^{i\bar \ell} \theta_{k\bar \ell}$, then we get 
\begin{eqnarray}
     \mathcal{L}(\varepsilon \rho)&= &\Delta_{\tilde \om}(\varepsilon \rho) +\beta\frac{\om^n}{\om_\f ^n}\Delta_{\om_\f} (\varepsilon\rho), \quad (\Delta_{\om}:=\tr_\om \dd)\\
     &=&  \varepsilon'\tr_{\tilde \om } (\om-\varepsilon''R_h) +\varepsilon' \beta\frac{\om^n}{\om_\f^n} \tr_{\om_\f}  (\om-\varepsilon''R_h) - \varepsilon'(\tr_{\tilde\om} \om+\beta\frac{\om^n}{\om_\f^n}\tr_{\om_\f}\om)\\
     &\geq &\varepsilon'C_0\tr_{\tilde\om}\om +\varepsilon'C_0\beta\frac{\om^n}{\om_\f^n}\tr_{\om_\f} \om -\varepsilon'(\tr_{\tilde\om} \om+\beta\frac{\om^n}{\om_\f^n}\tr_{\om_\f}\om).
\end{eqnarray}
Combining with
\begin{eqnarray}
     (\partial_t-\mathcal{L})\f= -c_\beta -2\tr_{\om_\f}\theta +\om_{\f}^{k \bar j}\om_{\f}^{i\bar \ell}\theta_{i\bar j} \om_{k\bar \ell}-\beta(n+1)\frac{\om^n}{\om^n_\f} +\beta\frac{\om^n}{\om_\f^n}\om_\f^{k\bar \ell}\om_{k\bar \ell}
\end{eqnarray}
yields

\begin{eqnarray}\nonumber
     -A(\partial_t-\mathcal{L})\tilde\f&=& -A[-c_\beta -2\tr_{\om_\f}\theta +\om_{\f}^{k \bar j}\om_{\f}^{i\bar \ell}\theta_{i\bar j} \om_{k\bar \ell}-\beta(n+1)\frac{\om^n}{\om^n_\f} +\beta\frac{\om^n}{\om_\f^n}\om_\f^{k\bar \ell}\om_{k\bar \ell}]\\\label{ineq:L_t_phi}
    && -A[\varepsilon'C_0\tr_{\tilde\om}\om +\varepsilon'C_0\beta\frac{\om^n}{\om_\f^n}\tr_{\om_\f} \om -\varepsilon'(\tr_{\tilde\om} \om+\beta\frac{\om^n}{\om_\f^n}\tr_{\om_\f}\om)]
\end{eqnarray}
Choose $A$ large such that 
$$ A \varepsilon'C_0 \geq C,$$
so that  $$C\beta\frac{\om^n}{\om_\f^n}\tr_{\om_\f}\om -A\varepsilon'C_0\beta\frac{\om^n}{\om_\f^n}\tr_{\om_\f}\om\leq 0 \text{ and }  C \om_\f^{k\bar j}\om_\f^{i\bar \ell}\theta_{i\bar j} \omega_{k\bar \ell}- A \varepsilon'C_0 \tr_{\tilde{\omega}}\om \leq 0, $$
where $\tilde\om^{k \bar \ell}=\om_\f^{k\bar j}\om_\f^{i\bar \ell}\theta_{i\bar j}$ and $ C$ is the  constant in \eqref{ineq:H_est}.
Then we infer from \eqref{ineq:H_est} and \eqref{ineq:L_t_phi} that
\begin{eqnarray}
     0&\leq &\nonumber C_1-A\left\{c_\beta-2\tr_{\omega_{\f}} \theta+ \om_\f^{k\bar j} \om_\f^{i\bar \ell}\theta_{i\bar j }\om_{k\bar \ell} - \beta(n+1)\frac{\om^n}{\om_\f^n} -\varepsilon'\tr_{\tilde\om}\om \right\}\\
       &=  &C_1-A\left\{c_\beta-2\tr_{\omega_{\f}} \theta+(1-\varepsilon') \tr_{\tilde\om}\om - \beta(n+1)\frac{\om^n}{\om_\f^n} \right\}.\label{ineq_final}
\end{eqnarray}
We now consider  two cases.

\medskip
\noindent
{\bf Case 1:} assume that 
\begin{equation}
    c_\beta-2\tr_{\omega_{\f}} \theta+(1-\varepsilon')\tr_{\tilde \om}\omega  \leq \delta_1=\frac{\delta_0}{4}
\end{equation}
at a maximum point of $H$, where $\delta_0$ is the constant in the condition \eqref{eq:cond_2_e}.

Choosing normal coordinates for $\omega$ such that $\omega_{\f}$ is diagonal with entries $\lambda_1,\ldots, \lambda_n$ and $\theta$ may not be diagonal but we denote its positive diagonal entries by $\mu_1, \ldots \mu_n$. 
Then the above inequality becomes
$$ c_\beta- 2 \sum_{j=1}^n \frac{\mu_j}{\lambda_j} +\sum_{j=1}^n \frac{(1-\varepsilon')}{\lambda_j^2}\mu_j\leq \delta_1.$$
 Now by  condition \eqref{eq:pos_cond_e}, we have
$$(c_\beta\omega-(n-1)\theta)\wedge\omega^{n-2}\wedge \beta_k> \delta_0 \omega^{n-1}\wedge \beta_k,$$
where $\beta_k= idz^{k}\wedge\bar z^{ k}$
 In our coordinates, we get
 $$c_\beta(n-1)!\beta_1\wedge\ldots\wedge \beta_n - (n-1)!\sum_{j\neq k} \mu_j\beta_1\wedge\ldots\wedge \beta_n> \delta_0 (n-1)!\beta_1\wedge\ldots\wedge \beta_n,$$
 so
 $$c_\beta-\sum_{j\neq k} \mu_j>\delta_0,\, \forall k=1,\ldots n.$$
 
Therefore for any $k=0,\ldots,n$ we have
\begin{eqnarray} \nonumber
     \delta_1&\geq& c_\beta + \sum_{j\neq k}\mu_j\left( \frac{ \sqrt{ 1-\varepsilon'}}{\lambda_j} - \frac{1}{\sqrt{ 1-\varepsilon'}} \right)^2 -\frac{1}{ 1-\varepsilon'}\sum_{j\neq k}\mu_j+\frac{\mu_k}{\lambda_k^2}(1-\varepsilon')-2\frac{\mu_k}{\lambda_k}\\
     &\geq& c_\beta-\frac{1}{ 1-\varepsilon'}(c_\beta-\delta_0) -2\frac{\mu_k}{\lambda_k}.
\end{eqnarray}
 Hence 
 \begin{equation}
    2 \frac{\mu_k}{\lambda_k}\geq - \delta_1 + \frac{\delta_0-c_\beta \varepsilon'}{1-\varepsilon'}\geq \frac{\delta_0}{10},\forall k=1, \ldots,n,
 \end{equation}
 where we choose $ \varepsilon$ small enough such that $ c_\beta \varepsilon'\leq \delta_0/4$.
 This implies that
 $\tr_{\theta}\omega_{\f}\leq C, $
 hence 
 $$\tr_{\omega}\omega_{\f}\leq C,$$
 at a maximum point of $H$
 since $\theta\leq C'\omega$. 
 Therefore $H$ is bounded from above, and we get the desired estimate for $\tr_{\omega}\omega_{\f}$. 
 
 \medskip
 \noindent
{\bf Case 2:} assume that at a maximum point of $H$
 \begin{equation}
    c-2\tr_{\omega_{\f}} \theta+(1-\varepsilon')\tr_{\tilde \om}\omega  > \delta_1,
\end{equation} 
then \eqref{ineq_final} yields
$$\beta(n+1)\frac{\om^n}{\om_\f^n} \geq \delta_1- \frac{C_1}{A}.$$
Choosing $A$ large enough we get $$\beta(n+1)\frac{\om^n}{\om_\f^n} \geq\delta_1/2.$$
By the equation we have
$\tr_{\om_\f}\theta\leq c_\beta-\dot \f-\delta_1/2 \leq C$. Therefore

\begin{eqnarray}
     \tr_{\om}\om_\f&\leq& \frac{\om_\f^n}{\om^n}(\tr_{\om_\f}\om)^{n-1}\\
     &\leq & C\frac{1}{|s|^{2(n-1)\gamma}},
\end{eqnarray}
here we use the assuption $\theta\geq |s|_h^{2\gamma}\om$.

This implies that 
$H\leq H(t_0,x_0)\leq C-(n-1)\gamma\log |s|^2_h(x_0)+ A\delta \log |s|^2_h(x_0)\leq C$,
since we can choose $A$ is big enough such that $A\delta> (n-1)\gamma$.  We thus get the inequality as required.
\end{proof} 
Higher order estimates follow by  standard local parabolic theory.
We summarize this section by the following proposition.
 \begin{proposition}\label{prop:estimates}
 There exists a uniform constant $C$ such that for all $t$ and $\e\in (0,\e_0]$, 
 \begin{equation}
     \|\f_\e\|_{L^\infty(X)}\leq C \text{ and } \| \dot \f_\e\|_{L^\infty(X)}\| \leq C,
 \end{equation}
 Moreover, for any compact set $K\subset X\setminus D$ and $k\geq 0$, the exists $C_{K,k}>0$ such that 
\begin{equation}
    \| \f_\e\|_{C^k(K,\om)} \leq C_{K,k}.
\end{equation}
 \end{proposition}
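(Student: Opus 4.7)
The plan is to assemble the estimates already proved in the preceding lemmas of the section, and then invoke standard local parabolic regularity to upgrade them to smooth bounds away from the divisor $D$.

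First, the uniform $L^\infty$ bound $\|\f_\e\|_{L^\infty(X)}\le C$ is exactly the content of Lemma \ref{lem:C0}, and the uniform bound $\|\dot\f_\e\|_{L^\infty(X)}\le C$ is the first assertion of Lemma \ref{lem:c1_t}. The same lemma also provides the pointwise lower bound $\om_{\f_\e}\ge C^{-1}\theta_\e$ on all of $X$, which will be crucial below.

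Next, fix a compact set $K\subset X\setminus D$. Since the section $s$ does not vanish on $K$, there is a constant $c_K>0$ with $|s|_h^2\ge c_K$ on $K$. Lemma \ref{lem:C2} then yields the uniform upper bound
\[
\tr_\om \om_{\f_\e}\le \frac{C}{|s|_h^{2\alpha}}\le C'_K \quad\text{on } K.
\]
Combining the lower bound $\om_{\f_\e}\ge C^{-1}\theta_\e$ from Lemma \ref{lem:c1_t} with assumption \eqref{eq:cond_1}, which gives $\theta\ge C_0|s|_h^{2\gamma}\om$, we also obtain $\om_{\f_\e}\ge c''_K\,\om$ on $K$. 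Thus the eigenvalues of $\om_{\f_\e}$ with respect to $\om$ are pinched between two positive constants uniform in $\e\in(0,\e_0]$ and in $t$.

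With these two-sided bounds in hand, the twisted J-flow becomes a uniformly parabolic fully nonlinear equation with smooth coefficients on $K$. Writing it locally as $\p_t\f_\e=F(D^2\f_\e,x)$ on a slightly larger compact neighborhood $K'\Supset K$, the parabolic Evans--Krylov theorem (exploiting the concavity structure of $-\tr_{\om_\f}\theta$ and $-\om^n/\om_\f^n$ in the relevant range of eigenvalues, up to an equivalent reformulation as in complex Monge--Amp\`ere type equations) delivers a uniform interior $C^{2,\alpha}$ bound on $K$. A standard parabolic Schauder bootstrap, obtained by differentiating the equation and iterating, then produces uniform $C^k$ bounds on $K$ for every $k\ge 0$. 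No new technical ingredient is needed at this stage; the principal analytic difficulty was handled in Lemma \ref{lem:C2}, and the only subtle point here is the uniform positivity of $\om_{\f_\e}$ on compact subsets of $X\setminus D$, which follows from combining Lemma \ref{lem:c1_t} with the nondegeneracy of $\theta$ away from $D$ encoded in \eqref{eq:cond_1}.
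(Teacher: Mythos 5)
Your proposal is correct and follows essentially the same route as the paper, which simply records the preceding lemmas and states that "higher order estimates follow by standard local parabolic theory." You have filled in the details the paper leaves implicit: the two-sided pinching of $\om_{\f_\e}$ on compact subsets of $X\setminus D$ obtained by combining Lemma~\ref{lem:c1_t} (giving $\om_{\f_\e}\ge C^{-1}\theta_\e$) with condition~\eqref{eq:cond_1} (giving $\theta\ge C_0|s|_h^{2\gamma}\om$, hence $\theta_\e$ uniformly positive on $K$) and Lemma~\ref{lem:C2} (giving the upper bound), followed by the concavity of the operator and the parabolic Evans--Krylov and Schauder machinery. This is exactly the intended argument.
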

 
 \section{Proof of the main result}
 The rest of  proof of Theorem \ref{thm:1} is similar to that of \cite[Theorem 1.1]{SW12}.
 \begin{proof}[Proof of Theorem \ref{thm:1}]
  By the estimates in Section \ref{sect:est} (Proposition \ref{prop:estimates}), we can find a sequence $\e_j\rightarrow 0$ such that $\f_{\e_j}$  converges to $\f$ in $C^\infty$ on compact sets of $(X\setminus D)\times [0,+\infty)$. Therefore $\f$ satisfies the degenerate twisted J-flow.  Since $\omega+\dd \f>0$ on $X\setminus D$ and $\sup_{X\setminus D}|\f|\leq C$ by Lemma \ref{lem:C0}, we can extend $\f$ uniquely to bounded $\omega$-psh function on $X$,  still denoted by $\f$.  

\medskip
The  $\J$-functional
\begin{eqnarray} \nonumber
     \J^\theta_{\omega,\beta}(\f)&=&\frac{1}{n!}\int_X \f \sum_{k=0}^{n-1}\theta \wedge \omega^k\wedge\omega_{\f}^{n-1-k}  -\frac{1}{(n+1)!}\int_X \underline{\theta} \f \sum_{k=0}^n \omega^k\wedge \omega^{n-k}_\f \\ 
     &&+\beta \int_X \f \frac{\omega^n}{n!} -\beta \frac{1}{(n+1)!}\int_X  \f \sum_{k=0}^n \omega^k\wedge \omega^{n-k}_\f,
\end{eqnarray}
where 
$$\underline{\theta}=\dfrac{n[\theta]\cdot [\omega]^{n-1}}{[\omega]^n},$$ 
 is well defined on $PSH(X,\omega)\cap L^\infty(X)$ with $\J^{\theta}_{\om, \beta
 }(\f+ C)= \J^{\theta}_{\om, \beta
 }(\f), \forall \f\in PSH(X,\omega)\cap L^\infty(X) $ and for any constant $C$. 
The uniform $L^\infty$ bound  on $\f$ yields 
\begin{equation}
    \J^\theta_{\omega, \beta}(\f(t))\geq -C,
\end{equation}
for a uniform constant $C$ independent of $t$. 
We also have 
\begin{equation}\label{J_decreasing}
    \frac{d}{dt}\J^\theta_{\omega, \theta}(\f(t))=  \frac{d}{dt}      \int_0^t \int_{X\setminus D} \dot \f(s)(n \theta\wedge \omega^{n-1}  + \beta\om^n-c_\beta \omega_{\f(s)}^n)  \frac{ds}{n!} = -\int_{X\setminus D} (\dot \f(s))^2 \dfrac{\omega_{\f(s)}^n}{n!}\leq 0,
\end{equation}
where $c_\beta=\underline{\theta}+\beta$. Therefore there exists a constant $C$ such that 
\begin{equation}
 \int_{X\setminus D} (\dot \f)^2 \dfrac{\omega_{\f}^n}{n!}\leq C.
\end{equation}
Then by an argument by contradiction in \cite[Proof of Theorem 1.1]{FLSW} shows that $ \dot \f(t)\rightarrow 0$ in $C^\infty_{loc}(X\setminus D)$. 
 Since we have uniform $C^\infty $ bounds for $\f$ on compact subsets of $X\setminus D$, the Arzel\`a-Ascoli theorem implies there is a sequence $t_j\rightarrow +\infty$, $\f(t_j)\rightarrow \f_\infty$ in $C_{loc}^{\infty}(X\setminus D)$ to a  $\f_\infty\in \mathcal{H}^{weak}_\om $. Since $\dot \f(t)\rightarrow 0$, $\f_\infty$ satisfies the equation 
 \begin{eqnarray}\label{eq:j_eq_lim}
      c_\beta\omega^n_{\f_\infty}= n\omega_{\f_\infty}^{n-1}\wedge \theta +  \beta \om^n.
 \end{eqnarray}

 We now prove the uniqueness of the solution $\f(t)$ to the degenerate J-flow.  Define
 $\phi_\delta = \f-\psi -\delta \rho $,  for some $\lambda>0$ will be chosen later,  $\omega_s= s\omega_\f+(1-s)\omega_{\psi}$ and $ \eta_s^{k\bar \ell} = \omega_s^{k \bar j }\omega_s^{i \bar \ell }\theta_{i\bar j}+\beta\frac{\om^n }{\om_{s}^n}\om_{s}^{k \bar \ell }$. Then on $X\setminus D$ we have
 \begin{eqnarray}
      \frac{\p \phi_\delta}{\p t} = \dot \f-\dot \psi&= & \left( \dfrac{n\omega_{\psi }^{n-1}\wedge \theta }{\omega_\psi ^n}  +\beta\frac{\om^n}{\om_\psi^n} -\dfrac{n\omega_{\f }^{n-1}\wedge \theta }{\omega_\f ^n} - \beta\frac{\om^n}{\om_\f^n} \right) \\
      &=& -\int_0^1\dfrac{d }{d s}\left( \dfrac{n\omega_{s }^{n-1}\wedge \theta }{\omega_{s} ^n} + \beta\frac{\om^n}{\om_s ^n}\right) ds\\
      &= & \int_0^1 \eta_s^{\bar \ell k}(\omega_\f- \omega_\psi )  ds\\
      &=& \left(\int_0^1 \eta_s^{k\bar \ell }ds\right)  \p_{k}\p_{\bar \ell} \phi_\delta -\delta \left(\int_0^1 \eta_s^{ k\bar \ell }ds\right) (R_h)_{k\bar \ell}. 
 \end{eqnarray}
Since $\sup_{X\setminus D}|\dot \f |\leq C$ and $\sup_{X\setminus D}|\dot \psi |\leq C$ we have $\omega_s\geq C^{-1}\theta$, and
$$\beta\frac{\om^n}{\om_u^n}= c_\beta- \dot u-\tr_{\om_u}\theta \leq C, \text{ on } X\setminus D,$$
where $u$ is $\f$ or $\psi$. Therefore we also have
$$\beta\frac{\om^n}{\om_s^n}\leq C, \text{ on 
} X\setminus D.$$
It follows   that   $(\eta_s^{\bar \ell k}  )\leq C\theta^{\bar \ell k}$ on $X\setminus D$.  Therefore
$$\delta \left(\int_0^1 \eta_s^{k\bar \ell }ds\right) (R_h)_{k\bar \ell} \leq C\delta g^{k\bar \ell }(R_h)_{k\bar \ell} \leq \frac{2C\delta}{\varepsilon_0},$$
 since $\theta-\varepsilon_0R_h>0$ for a uniform constant $\varepsilon_0$. We thus obtain
 
 \[\frac{\p}{\p t}\phi_\delta \geq \left(\int_0^1 \eta_s^{k\bar \ell }ds\right)  \p_{k}\p_{\bar \ell} \phi_\delta -\frac{2C\delta}{\varepsilon_0}.\]
 The maximum principle, on any time interval $[0,T]$ now yields
 $$\phi_\delta  \geq -C_1 \delta t\geq -C_1 \delta T ,$$
 for a uniform constant $C_1$. This implies that $\f\geq \psi +\delta |s|^2_h -C_1\delta T$, so $\f\geq \psi$ on $X\setminus D$ by letting $\delta\rightarrow 0$. The same argument shows that $\psi\geq \f$ on $X\setminus D$. Therefore $\f=\psi$ on $X\setminus D$, hence on $X$. 
 \end{proof}
  
 \begin{proof}[Proof of Corollary \ref{cor:j_bounded_below}]
 For any $\f_0\in \mathcal{H}_\omega$, since $\J^\theta_{\omega,\beta}$ is decreasing  along the J-flow by \eqref{J_decreasing}, we have 
  $ \J^\theta_{\omega,\beta}(\f_0)\geq\lim_{t\rightarrow \infty} \J^\theta_{\omega,\beta} (\f_t)$.  Arguing as in  \cite[Lemma 3.2]{FLSW}, we obtain  $$\lim_{t\rightarrow \infty} \J^\theta_{\omega,\beta} (\f_t)=\J^\theta_{\omega,\beta} (\f_\infty).$$
  
By  Theorem \ref{thm:SW-Z}, for any $\e\in (0, \epsilon_0]$, $\f_\e(t)$ converge uniformly to $\f_{\e, \infty}$ which satisfies 
 $$\beta\om^n+ n\om_{\f_{\e, \infty}}^{n-1}\wedge\theta_{\e}= c_\e\om^n_{\f_{\e, \infty}}.$$
 It follows from  Proposition \ref{prop:C0_elliptic}  that $Osc(\f_{\e, \infty}):= \sup_X \f_{\e, \infty}-\inf_X \f_{\e, \infty}\leq C$ for some $C>0$ only depends on $X, \omega, \|\theta\|_{C^0(X, \om)}, \beta$. This implies that  $Osc(\f_{\infty}) $ is uniformly bounded by a constant  depending only on $X, \omega, \|\theta\|_{C^0(X, \om)}, \beta$. 
 
 Since  for any constant  $C$, $\J^\theta_{\omega,\beta} (\f+C)=\J^\theta_{\omega,\beta}(\f), \forall \f\in PSH(X,\om)  $, we infer that $\J^\theta_{\omega,\beta} (\f_\infty)\geq B$ for some uniformly constant $B$ which  only depends on $X, \omega, \|\theta\|_{C^0(X, \om)}, \beta$. Hence we get the uniform lower bound for $\J^\theta_{\omega,\beta} $ on $\mathcal{H}_\om$ as required.
 \end{proof}

\begin{proof}[Proof of Corollary \ref{cor:twisted_cscK}]  Since $\J_{\om,\beta}^{\theta}= \J_{\om,\beta}^{\theta-\eta} +\J_\om^{\eta} $ and 
$|\J_{\om,\beta}^{\theta-\eta}-\J^{-Ric(\om)}_{\om, \beta}|\leq C$ because  $\theta-\eta$ and $-Ric(\om)$ are in the same cohomology class (cf. \cite[Proposition 22]{CS}), we infer that the Mabuchi K-energy twisted by $\eta$ 
\begin{eqnarray}
    \M^\eta_\om= \M_\om+\J^{\eta}_\om&=& (\M_{ent}-\beta J_\om)+\J_{\om, \beta}^{-Ric(\om)} +\J^{\eta}_\om\\
    &\geq &(\M_{ent}-\beta J_\om)+\J_{\om, \beta}^{\theta-\eta} +\J^{\eta}_\om- C\\
    &=&(\M_{ent}-\beta J_\om)+\J_{\om, \beta}^{\theta} - C.
\end{eqnarray}
We consider the degenerate twisted J-flow with $\theta\in -c_1(X)+\eta$ and $\om$ in the assumption.
Then by Corollary \ref{cor:j_bounded_below}, $\J^\theta_{\omega,\beta}$ is uniformly bounded from below on $\mathcal{H}_\om$. Combining with the fact that 
$\M_{ent}\geq \alpha_0 J_\om-C$ and $\alpha_0>\beta$, we infer that the   functional $\M_{\om}^\eta$ is proper. Then Theorem 4.2 in \cite{CC21b} implies the existence of cscK metric with respect to $\eta$.
\end{proof}

We remark that another sufficient  criterion for the properness  of  a twisted Mauchi K-energy can be obtained following the argument for the untwisted version in \cite{LSY} (see also \cite[Lemma  2.1]{JSS}).  We include a sketch of the proof  for the reader convenience.

\begin{proposition}\label{prop:twisted_cscK_crt_2}
Let $X$
 be a compact K\"ahler manifold and $\eta$ 
 be a smooth closed semipositive  $(1,1)$-form. 
Suppose that 
\begin{equation}
    \M_{ent}(\f)\geq \alpha_1(I_\om - J_\om ) (\f)-C, \, \forall \f\in \mathcal{H}_\om,
\end{equation}
for some $\alpha_1>0$, where 
$$I_\om(\f)= \frac{1}{n!}\int_X \f(\omega^n-\omega_\f^n).$$

If  there exists $\epsilon\in [0,\alpha_1)$, a K\"ahler form $\hat\omega \in [\omega]$ and a $(1,1)$-form $\theta\in -c_1(K_X)+[\eta]$ such that $\theta+\epsilon\hat \om>0$ and 
\begin{equation}\label{eq:cond_twisted_2}
    \left( \left[ \frac{n(-c_1(X)+[\eta])\cdot [\omega]^{n-1}}{[\omega]^n}+ \epsilon \right]\hat\omega -(n-1)\theta \right)\wedge  \hat\omega^{n-2}>0.
\end{equation}

Then the  Mabuchi functional twisted by $\eta$ is proper on  $\mathcal{H}_\omega$.  Therefore by \cite[Theorem 4.1]{CC21b} there exists a twisted  cscK metric in $[\om]$. 
\end{proposition}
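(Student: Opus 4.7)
The plan is to run an argument parallel to the proof of Corollary \ref{cor:twisted_cscK}, but using the twisted Tian-type estimate $\M_{ent}\geq \alpha_1(I_\om-J_\om)-C$ in place of the alpha-invariant bound. First I will reduce $\M^\eta_\om$ to $\M_{ent}+\J^\theta_\om$ modulo bounded terms, then split the twisted $\J$-functional into a piece controlled by Theorem \ref{thm:SW-Z} and a piece dominated by $I_\om-J_\om$.

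By the Chen-Tian decomposition, one has $\M^\eta_\om(\f)=\M_{ent}(\f)+\J^{-Ric(\om)}_\om(\f)+\J^\eta_\om(\f)$. Since $\theta$ lies in the cohomology class $[-Ric(\om)]+[\eta]$, the cocycle property of the twisted $\J$-functional (already used in the proof of Corollary \ref{cor:twisted_cscK}) gives the bounded difference $|\J^{-Ric(\om)}_\om(\f)+\J^\eta_\om(\f)-\J^\theta_\om(\f)|\leq C$, and hence
\[\M^\eta_\om(\f)\geq \alpha_1(I_\om-J_\om)(\f)+\J^\theta_\om(\f)-C.\]

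To bound $\J^\theta_\om$ from below, I decompose $\J^\theta_\om(\f)=\J^{\theta+\epsilon\hat\om}_\om(\f)-\epsilon\,\J^{\hat\om}_\om(\f)$. For the first piece, the hypothesis makes $\theta+\epsilon\hat\om$ K\"ahler, and a direct computation using $\underline{\hat\om}=n$ (since $\hat\om\in[\om]$) shows that \eqref{eq:cond_twisted_2} is exactly the cone condition \eqref{eq:cond_2} for the twisted J-equation associated with $\theta+\epsilon\hat\om$ and parameter $\beta=0$; Theorem \ref{thm:SW-Z} then produces a smooth solution to the corresponding J-equation, giving a uniform lower bound $\J^{\theta+\epsilon\hat\om}_\om(\f)\geq -C_1$. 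For the second piece, I write $\hat\om=\om+\dd\psi$ and exploit the linearity of $\J^{\cdot}_\om$ in the form together with the standard identity $\J^\om_\om=-J_\om$; an integration by parts yields
\[\J^{\hat\om}_\om(\f)=-J_\om(\f)+\frac{1}{n!}\int_X\psi(\om_\f^n-\om^n),\]
and since $\om_\f^n$ and $\om^n$ have the same total mass, the last integral is uniformly bounded by $2\|\psi\|_{C^0(X)}\int_X\om^n$. Using $I_\om-J_\om\geq 0$, this gives the (non-sharp but sufficient) estimate $\J^{\hat\om}_\om(\f)\leq (I_\om-J_\om)(\f)+C_2$.

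Combining these estimates yields $\M^\eta_\om(\f)\geq (\alpha_1-\epsilon)(I_\om-J_\om)(\f)-C_3$, which is coercive in the sense of \eqref{cond:proper_def} since $\alpha_1>\epsilon$ and $I_\om-J_\om$ is equivalent to $J_\om$. The existence of a twisted cscK metric in $[\om]$ then follows from \cite[Theorem 4.1]{CC21b}. The main care required is in keeping the coefficient exactly $1$ in front of $(I_\om-J_\om)$ in the bound on $\J^{\hat\om}_\om$, so that the sharp threshold $\epsilon<\alpha_1$ suffices; this is where the explicit integration-by-parts formula for $\J^{\dd\psi}_\om$ and the identity $\J^\om_\om=-J_\om$ are essential, in the spirit of \cite[Lemma 2.1]{JSS}.
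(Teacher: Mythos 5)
Your overall strategy matches the paper's: reduce $\M^\eta_\om$ to $\M_{ent}+\J^\theta_\om$ modulo bounded terms, invoke Song--Weinkove/Zheng (Theorem~\ref{thm:SW-Z}) via \eqref{eq:cond_twisted_2} to bound $\J^{\theta+\epsilon\hat\om}_\om$ from below, and absorb the remaining $\epsilon$-piece into $\alpha_1(I_\om-J_\om)$. The bookkeeping differs slightly: you split $\J^\theta_\om=\J^{\theta+\epsilon\hat\om}_\om-\epsilon\J^{\hat\om}_\om$ and estimate $\J^{\hat\om}_\om$ separately, whereas the paper writes $\J^{\epsilon\om}_\om+\J^{-Ric(\om)}_\om+\J^\eta_\om=\J^{\epsilon\om-Ric(\om)+\eta}_\om$ and then compares this directly with $\J^{\theta+\epsilon\hat\om}_\om$ using cohomology invariance. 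These are essentially the same argument rearranged, and both deliver the coercivity constant $\alpha_1-\epsilon$. Your verification that \eqref{eq:cond_twisted_2} is precisely the cone condition \eqref{eq:cond_2} for the pair $(\om,\theta+\epsilon\hat\om)$ with $\beta=0$, using $\underline{\hat\om}=n$, is correct and is the same observation the paper relies on.

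One factual error: the identity you invoke, $\J^\om_\om=-J_\om$, is wrong. As the paper itself states at the start of its proof (and as a direct computation from \eqref{eq:J_fct} with $\underline\om=n$ confirms), the correct identity is $\J^\om_\om=I_\om-J_\om$. Your subsequent integration by parts is fine, so the corrected formula should read
\begin{equation*}
\J^{\hat\om}_\om(\f)=(I_\om-J_\om)(\f)+\frac{1}{n!}\int_X\psi\,(\om_\f^n-\om^n),
\end{equation*}
which gives $\J^{\hat\om}_\om\leq (I_\om-J_\om)+C_2$ directly, with no slack and no need to invoke $I_\om\geq 0$. Fortuitously, your erroneous $-J_\om$ is dominated by the correct $I_\om-J_\om$ (since $I_\om\geq 0$), so the extra step you added to pass from $-J_\om$ to $I_\om-J_\om$ exactly compensates for the wrong starting point, and your final inequality $\M^\eta_\om\geq(\alpha_1-\epsilon)(I_\om-J_\om)-C_3$ stands. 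You should nevertheless correct the identity; once corrected, the ``non-sharp but sufficient'' caveat disappears and the argument is cleaner.
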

\begin{proof}
We follow the arguments in \cite{LSY} and  \cite[Lemma  2.1]{JSS}. By direct computation we have  $\J^{\omega}_{\omega}= (I_\om-J_\om) $. 
Therefore we obtain
\begin{equation}
    \M_{ent}(\f)\geq \alpha_1 \J^\om_\om-C\geq  \J^{\epsilon\om}_\om-C\,
\end{equation}
for any $\epsilon\in [0, \alpha_1)$.  On the other hand we have
$$\J^{\epsilon\om}_\om+ \J^{-Ric(\om)}_{\om}+ \J^{\eta}_\om=\J^{\epsilon\om-Ric(\om)+\eta}_{\omega}.$$
Since $\epsilon\om -Ric(\om)+\eta$ and $\epsilon\hat\om +\theta $ are in the same cohomology class, we infer that 
$$|\J^{\epsilon\om-Ric(\om)+\eta}_{\omega} -\J^{\theta+\epsilon\hat\om}_\om|\leq C$$
for some uniform constant $C$. It follows from the main theorem of Song-Weinkove \cite{SW08} that the condition \eqref{eq:cond_twisted_2} implies  that $\J^{\theta+\epsilon \hat \om }_\om$ is uniformly bounded from below. Therefore the Mabuchi functional twisted by $\eta$, $\M^\eta_\om=\M_\om+\J^\eta_\om$, is proper on $\mathcal{H}_\om$. 
\end{proof}

\begin{remark}\label{remark_final}
It follows from \cite[Lemma 6.19, Remark 6.20]{Tian00} that 
$$ \frac{1}{n}J_\om\leq I_\om-J_\om\leq nJ_\omega.$$ Therefore this criterion implies the one in Corollary \ref{cor:twisted_cscK} for $\beta\in [0, \alpha_0/n)$. 
\end{remark}

\end{document}